\definecolor{wine-stain}{rgb}{0.5,0,0}
\newtheorem{thm}{Theorem}[section]
\newtheorem{lem}[thm]{Lemma}
\newtheorem{prop}[thm]{Proposition}
\theoremstyle{definition}
\newtheorem{defn}[thm]{Definition}
\theoremstyle{remark}
\newtheorem{rem}[thm]{Remark}
\numberwithin{equation}{section}
\DeclareFontFamily{U}{eus}{\skewchar\font'60}
\DeclareFontShape{U}{eus}{m}{n}{%
	<-6>eusm5%
	<6-8>eusm7%
	<8->eusm10%
}{}
\DeclareMathAlphabet\EuScript{U}{eus}{m}{n}
\begin{document}

\title[]{Quadratic duality for chiral algebras}%
\author{Zhengping Gui, Si Li and Keyou Zeng}

  \address{
Z. Gui: International Centre for Theoretical Physics, Trieste, Italy;
}
\email{zgui@ictp.it}
  \address{
S. Li:
Yau Mathematical Sciences Center, Tsinghua University, Beijing, China
}

\email{sili@mail.tsinghua.edu.cn}

  \address{
	K. Zeng:
	Perimeter Institute for Theoretical Physics, Waterloo, Canada
}

\email{kzeng@perimeterinstitute.ca}

\thanks{}%
\subjclass{}%
\keywords{}%

\begin{abstract}
We introduce a notion of quadratic duality for chiral algebras. This can be viewed as a chiral version of the usual quadratic duality for quadratic associative algebras. We study the relationship between this duality notion and the Maurer-Cartan equations for chiral algebras, which turns out to be parallel to the associative algebra case. We also present some explicit examples.
\end{abstract}
\maketitle
\tableofcontents


\section{Introduction}
The notion of Koszul duality is ubiquitous in mathematics. It has found many applications in representation theory, algebraic geometry, homological algebra, and topology (see, e.g., \cite{beilinson1996koszul,bernshtein1978algebraic,ginzburg1994koszul,goresky1998equivariant}). In the original context of quadratic algebra, it can be understood as the quadratic duality studied by Stewart Priddy \cite{priddy1970koszul}. It is natural to extend this duality to other algebraic structures. In \cite{beilinson2004chiral}, Beilinson and Drinfeld introduce the notion of chiral algebras which encodes the algebraic structure of the chiral part of two-dimensional conformal field theories on arbitrary smooth Riemann surfaces. In some situations, chiral algebras can be viewed as global versions of vertex algebras. For example, it is shown in \cite{frenkel2004vertex} that any quasiconformal vertex algebra gives rise to a chiral algebra on an arbitrary Riemann surface. According to \cite{beilinson2004chiral}, chiral algebras are meromorphic versions of associative algebras. The concept of quadratic algebra in the world of associative algebras can be extended to chiral algebras. The chiral algebra freely generated by a given non-empty set of sections subject to quadratic relations is constructed in \cite{beilinson2004chiral}. The vertex algebra version of this was constructed earlier in \cite{roitman2001combinatorics}.  In this paper, we formulate the notion of quadratic duality for quadratic chiral algebras.

Recall that a quadratic algebra is the quotient algebra $A=T(V)/(R)$ of the tensor algebra of a finite-dimensional graded vector space $V$ by the ideal generated by quadratic relations $R\subset V^{\otimes 2}$. The algebra $A$ is called the quadratic algebra associated to the quadratic datum $(V,R)$. The quadratic dual algebra is defined by the quadratic datum $(s^{-1}V^{\vee},R^{\perp})$, where $V^{\vee}$ is the dual of $V$ and $R^{\perp}\subset (s^{-1}V^{\vee})^{\otimes 2}$ is the annihilator of $R$. Here the notation $s$ denotes the suspension. In the literature, people sometimes use the unsuspended version $(V^{\vee},s^2R^{\perp})$ as the dual algebra.

Following \cite{beilinson2004chiral}, in the context of chiral algebras, the space of generators $V$ is replaced with a graded locally free sheaf $N$ of finite rank on a smooth complex algebraic curve $X$. The space of quadratic relations $R\subset V^{\otimes 2}$ is replaced with a locally free subsheaf $P\subset j_*j^*N\boxtimes N$ such that $P|_U=N\boxtimes N|_{U}$. Here $U=X^2-\Delta$ is the complement of the diagonal and $j:U\hookrightarrow X^2$ is the open embedding. This is the major difference from the quadratic algebra case, since there we allow arbitrary quadratic relations $R\subset V^{\otimes 2}$. It is not surprising because we have the locality in the axioms of chiral algebra (corresponds to the locality in the definition of vertex algebra). By locality, for any local section $a\boxtimes b\in N\boxtimes N$ one can find an integer $k_{a,b}$ sufficiently large such that the chiral operation annihilates $(z_1-z_2)^{k_{a,b}}a\boxtimes b$. This forces us to have the condition $P|_U=N\boxtimes N|_U$. In this case, we say that $(N,P)$ is a chiral quadratic datum. Beilinson and Drinfeld construct the free chiral algebra associated to $(N,P)$ which we denote by $\mathcal{A}(N,P)$. Then we proceed as in the case of quadratic algebras. We take $s^{-1}N^{\vee}_{\omega^{-1}}:=s^{-1}N^{\vee}\otimes _{\mathcal{O}_X}\omega_X^{-1}$ to be the sheaf of dual generators and $P^{\perp}$ to be the sheaf of the chiral annihilator of $P$. More precisely, the subsheaf  $P^{\perp}\subset j_*j^*s^{-1}N^{\vee}_{\omega^{-1}}\boxtimes s^{-1}N^{\vee}_{\omega^{-1}}$ satisfies
$$
\mu(\langle P\otimes \omega_{X^2},P^{\perp}\otimes s^{2}\omega_{X^2}\rangle)=0,
$$
where $\mu:j_*j^*\omega_{X^2}\rightarrow\Delta_*\omega_X$ is the unit chiral operation. We come to the following definition.

\begin{defn}[= Definition \ref{Def_quad_dual}]
 A chiral quadratic datum $(N,P)$ is called dualizable if the pair  $(s^{-1}N^{\vee}_{\omega^{-1}},P^{\perp})$ is also a chiral quadratic datum. In this case, the quadratic dual chiral algebra of $\mathcal{A}=\mathcal{A}(N,P)$ is defined to be $\mathcal{A}^!:=\mathcal{A}(s^{-1}N^{\vee}_{\omega^{-1}},P^{\perp})$.
\end{defn}
Within this set-up, we prove that a morphism from $\mathcal{A}(N,P)$ to an arbitrary graded chiral algebra $\mathcal{B}$ can be viewed as a solution of the Maurer-Cartan equation in the tensor product $\mathcal{A}(s^{-1}N^{\vee}_{\omega^{-1}},P^{\perp})\otimes \mathcal{B}$.

\begin{thm}[= Theorem \ref{Representable1}]
Let $\mathcal{B}$ be a graded chiral algebra. Take a quadratic chiral algebra $\mathcal{A}=\mathcal{A}(N,P)$ and its dual $\mathcal{A}^!=\mathcal{A}(s^{-1}N^{\vee}_{\omega^{-1}},P^{\perp})$.  There exists an injective map
$$
\mathrm{Hom}(\mathcal{A},\mathcal{B})\hookrightarrow \mathrm{MC}(\mathcal{A}^!\otimes\mathcal{B}).
$$
\end{thm}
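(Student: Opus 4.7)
The strategy mirrors the classical argument for quadratic associative algebras, adapted to the chiral setting. The plan is to construct the assignment $\Phi \mapsto \alpha_\Phi$ using universal properties, then verify that the Maurer--Cartan equation on $\alpha_\Phi$ encodes precisely the quadratic relations defining $\mathcal{A}(N,P)$.

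First I would invoke the universal property of the free quadratic chiral algebra: a chiral algebra morphism $\Phi : \mathcal{A}(N,P) \to \mathcal{B}$ is equivalent to a $\mathcal{D}_X$-linear map $\phi : N \to \mathcal{B}$ of the correct cohomological degree such that the induced composition
\[
P \hookrightarrow j_*j^*(N\boxtimes N) \xrightarrow{\phi\boxtimes\phi} j_*j^*(\mathcal{B}\boxtimes\mathcal{B}) \xrightarrow{\mu_{\mathcal{B}}} \Delta_*\mathcal{B}
\]
vanishes. This reduces the verification to the level of generators and relations. Next, I would dualize to read off an element $\alpha_\Phi \in \mathcal{A}^! \otimes \mathcal{B}$. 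The canonical evaluation pairing $N \otimes s^{-1}N^\vee_{\omega^{-1}} \otimes \omega_X \to \mathcal{O}_X$ identifies $\phi \in \mathrm{Hom}_{\mathcal{D}}(N,\mathcal{B})$ with a section $\alpha_\Phi$ of $s^{-1}N^\vee_{\omega^{-1}} \otimes \mathcal{B}$, which I view as a section of $\mathcal{A}^! \otimes \mathcal{B}$ via the inclusion of generators. Since $\alpha_\Phi$ determines $\phi$ and $\phi$ determines $\Phi$ uniquely through the universal property, the assignment $\Phi \mapsto \alpha_\Phi$ is manifestly injective.

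The heart of the argument is to show that $\alpha_\Phi$ solves the MC equation in $\mathcal{A}^! \otimes \mathcal{B}$ if and only if $\phi$ respects the relations $P$. The MC condition is the vanishing of the quadratic term built from the product chiral operation on $\mathcal{A}^! \otimes \mathcal{B}$. Applied to $\alpha_\Phi$, the $\mathcal{A}^!$-factor produces the chiral bracket of generators in $s^{-1}N^\vee_{\omega^{-1}} \boxtimes s^{-1}N^\vee_{\omega^{-1}}$, which in $\mathcal{A}^!$ is governed precisely by the subsheaf $P^\perp$; the $\mathcal{B}$-factor contributes $\mu_{\mathcal{B}}\circ(\phi\boxtimes\phi)$. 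The defining perpendicularity
\[
\mu\bigl(\langle P \otimes \omega_{X^2},\, P^\perp \otimes s^2\omega_{X^2}\rangle\bigr) = 0
\]
then translates the MC quadratic term on $\alpha_\Phi$ into the pairing of $\phi\boxtimes\phi$ against $P$, so that MC-ness of $\alpha_\Phi$ becomes equivalent to the vanishing of $\mu_{\mathcal{B}}\circ(\phi\boxtimes\phi)$ on $P$.

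The main technical obstacle will be bookkeeping: matching the suspension $s^{-1}$, the $\omega_X$-twists, and the various sign conventions so that the product chiral bracket on $\mathcal{A}^! \otimes \mathcal{B}$ genuinely implements the duality pairing $\langle P, P^\perp\rangle$. A secondary subtlety is identifying the correct product chiral operation on $\mathcal{A}^!\otimes\mathcal{B}$ and the resulting notion of MC element. Once these identifications are laid out carefully, both the well-definedness of $\alpha_\Phi$ as an MC element and the injectivity of the resulting map follow.
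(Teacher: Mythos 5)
Your overall strategy is sound and lands on the same key objects as the paper: the canonical element $\underline{s^{-1}\mathbf{Id}}$ of $s^{-1}N^{\vee}\otimes_{\mathcal{O}_X}N$, the universal property of $\mathcal{A}(N,P)$ (note: it is stated for $\mathcal{O}_X$-linear maps $N_{\omega}\to\mathcal{B}$ whose square is annihilated by $\mu_{\mathcal{B}}$ on $P\otimes\omega_{X^2}$, not $\mathcal{D}_X$-linear maps out of $N$), and injectivity read off from the fact that $\alpha_\Phi$ remembers $\phi$. Where you differ is in organization: the paper first proves (Proposition \ref{MaruerCartan1}) that $\underline{s^{-1}\mathbf{Id}}$ itself is an MC element of $\mathcal{A}^!\otimes\mathcal{A}$, and then disposes of the theorem functorially, by observing that $\mathrm{id}\otimes\varphi:\mathcal{A}^!\otimes\mathcal{A}\to\mathcal{A}^!\otimes\mathcal{B}$ is a chiral algebra morphism and hence carries MC elements to MC elements; your plan instead re-runs that computation directly inside $\mathcal{A}^!\otimes\mathcal{B}$. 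Your route works, but to make it rigorous you need the two ingredients the paper isolates in Proposition \ref{MaruerCartan1}: (i) the local dual-basis decomposition $\underline{s^{-1}\mathbf{Id}}\boxtimes\underline{s^{-1}\mathbf{Id}}|_V=\sum_{\alpha}P^{\vee}_{\alpha}\otimes P_{\alpha}$, which rests on $P$ being locally free of rank $r^2$ and $P^{\perp}\simeq s^{-2}P^{\vee}\otimes\omega_{X^2}^{-1}$ (the dualizability proposition); and (ii) the fact that the chiral operation on the tensor product does not factor as a product of the two brackets, but is controlled through the factorization-algebra definition, so the vanishing comes from $\ker\mu_{\mathcal{A}^!}\otimes\ker\mu_{\mathcal{B}}$ lying in $\ker\mu_{\mathcal{A}^!\otimes\mathcal{B}}$ (with $(\phi\boxtimes\phi)(P\otimes\omega_{X^2})\subset\ker\mu_{\mathcal{B}}$ exactly because $\Phi$ is a morphism). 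The paper's functorial detour buys a shorter argument and avoids redoing the local analysis with $\mathcal{B}$ in place of $\mathcal{A}$.

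One genuine overstatement: you announce that MC-ness of $\alpha_\Phi$ is \emph{equivalent} to $\phi$ respecting the relations. Only the forward implication is needed here, and the converse is false at this level of generality; recovering a morphism from an MC element requires the extra hypotheses of Theorem \ref{Representable2} (the dual datum $(s^{-1}N^{\vee}_{\omega^{-1}},P^{\perp})$ effective, $N$ and $\mathcal{B}$ concentrated in degree $0$), since otherwise one cannot isolate the individual terms $\mu_{\mathcal{B}}\bigl((\phi\boxtimes\phi)(P_{\alpha})\bigr)$ from the single equation $\mu(\alpha\boxtimes\alpha)=0$, nor guarantee that a degree $-1$ section of $\mathcal{A}^!\otimes\mathcal{B}$ comes from the generators. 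Dropping the ``only if'' claim, or conditioning it as in Theorem \ref{Representable2}, fixes this.
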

In some special cases, the above injection can be shown to be a bijection (see Theorem \ref{Representable2}).  To obtain general results parallel to those in associative algebras, one needs to understand the meaning of Koszulness in chiral algebras and work in a suitable homotopy setting. These are not included here and will be investigated in future work.

In the literature, people also consider the non-homogeneous quadratic duality for algebras, see \cite{loday2012algebraic, positsel1993nonhomogeneous, priddy1970koszul}. This can be also extended to the context of chiral algebras (see Section \ref{Nonhomogeneous1}). We can prove similar theorems (see the second part of Section \ref{MCEquation}).

Our work is also motivated by a recent mathematical understanding of AdS/CFT correspondence, or holographic correspondence in physics initiated by Costello and Li in \cite{Costello:2017fbo, Costello:2016mgj}. It turns out that Koszul duality plays a crucial role in the holographic correspondence. Along these lines, \cite{costello2021twisted} proposed a physical definition of Koszul duality for chiral algebras from consideration of defect. They defined the Koszul dual $\mathcal{A}^!$ to be the universal defect chiral algebra that can be coupled to the field theory corresponding to the original chiral algebra $\mathcal{A}$. This physical definition parallels the mathematical result of the bijection between $\mathrm{Hom}(\mathcal{A},\mathcal{B})$ and $ \mathrm{MC}(\mathcal{A}^!\otimes\mathcal{B})$. We refer to \cite{Paquette:2021cij} for an introduction to Koszul duality in physics.

We should emphasize that the chiral Koszul duality considered in \cite{francis2012chiral} is not what we are studying in the present paper. In \cite{francis2012chiral}, they establish an equivalence between chiral Lie algebras  (which we call chiral algebras) and factorization coalgebras (which we call factorization algebras) on higher-dimensional varieties in a form of Koszul duality. In other words, Koszul duality studied there is a way of interchanging two different descriptions of the same underlying algebraic structure. The chiral quadratic duality we study turns a chiral algebra into a new chiral algebra.  However, they mention that the chiral quadratic duality considered here should exist    as an algebro-geometric analogue of $\mathcal{E}_n$-Koszul duality (see \cite[pp15]{francis2012chiral}). For $\mathcal{E}_n$-Koszul duality, we refer readers to \cite{ayala2015factorization}.

Here is an outline of the paper. In Section \ref{ChiralFact} we review chiral algebras and factorization algebras. Section \ref{ChiralQuadratic} is devoted to the quadratic duality theory for quadratic chiral algebras. We also extend the duality to the non-homogeneous case. In Section \ref{MCEquation} we establish a simple relation between chiral quadratic duality and solutions of Maurer-Cartan equations. After these general discussions, in Section \ref{Examples} we present some examples.

\section{Conventions}

\begin{itemize}

\item A graded vector space $V$ is a direct sum of vector spaces $V=\mathop{\bigoplus}\limits_{i\in\mathbb{Z}}V_i$. We will use the $k$-th shift notation $V[k],k\in \mathbb{Z}$ as well as the suspension notation $s^{-k}V,k\in\mathbb{Z}$
$$
(s^{-k}V)_i=(V[k])_i=V_{i+k}
.$$
We also use $|a|$ for the degree of a homogeneous element $a\in V.$
    \item An $\mathcal{O}_X$-module $N$ is called $\mathbb{Z}$-graded if $N=\mathop{\bigoplus}\limits_{i\in \mathbb{Z}}N_i$ where each $N_i$ is an $\mathcal{O}_X$-module. A submodule of a $\mathbb{Z}$-graded $\mathcal{O}_X$-module $N$ means a $\mathbb{Z}$-graded $\mathcal{O}_X$-module $N'=\mathop{\bigoplus}\limits_{i\in \mathbb{Z}}N'_i$ such that $N'_i\subset N_i, \forall i\in\mathbb{Z}$.
    \item We use $X$ for a smooth complex algebraic curve throughout this paper. Let $N$ be a locally free sheaf on $X$. We denote the dual of $N$ by $N^{\vee}$. For abbreviation, we use $N_{\omega^{\pm 1}}:=N\otimes_{\mathcal{O}_{X}} \omega^{\pm 1}_X$, where $\omega_X$ is the canonical sheaf of $X$.
    \item For right $\mathcal{D}_X$-modules $M$ and $N$, the tensor product is $M\otimes N:=(M_{\omega^-1}\otimes_{\mathcal{O}_X}N_{\omega^{-1}})_{\omega}$.
\end{itemize}
\noindent\textbf{Acknowledgment.} The authors would like to thank Asem Abdelraouf, Kevin Costello, Lothar G\"{o}ttsche and Vadym Kurylenko for helpful communications. S. L. is supported by the National Key R\&D Program of China (NO. 2020YFA0713000). K. Z. is supported by the Perimeter Institute for Theoretical Physics. Research at Perimeter Institute is supported in part by the Government of Canada,
through the Department of Innovation, Science and Economic Development Canada,
and by the Province of Ontario, through the Ministry of Colleges and Universities.

\section{Chiral algebras and Factorization algebras}\label{ChiralFact}
In this section, we give a brief review of some basic definitions of chiral algebras and factorization algebras. For more details, see \cite{beilinson2004chiral,frenkel2004vertex,gaitsgory1998notes}. Throughout this section, $X$ stands for a smooth curve over $\mathbb{C}$.

\begin{defn}\label{chiralDefn}
Let $\mathcal{A}$ be a $\mathbb{Z}$-graded $\mathcal{D}_X$-module. A chiral algebra structure on $\mathcal{A}$ is a degree 0 $\mathcal{D}_{X^2}$-module map:
$$
\mu:j_*j^*\mathcal{A}\boxtimes\mathcal{A}\rightarrow \Delta_*(\mathcal{A}),
$$
where $\Delta:X\rightarrow X^2$ is the diagonal embedding and $j:U=X^2-\Delta\hookrightarrow X^2$ is the open embedding.

The map $\mu$ satisfies the following two conditions:
\begin{itemize}
  \item Antisymmetry:

  If $f(z_1,z_2)\cdot a\boxtimes b$ is a local section of $j_*j^*\mathcal{A}\boxtimes\mathcal{A}$, then
  \begin{equation}\label{Antisymmetry}
  \mu(f(z_1,z_2)\cdot a\boxtimes b)=-(-1)^{|a||b|}\sigma_{1,2}\mu(f(z_2,z_1)\cdot b\boxtimes a),
  \end{equation}
  where $\sigma_{1,2}$ acts on $\Delta_*\mathcal{A}$ by permuting two factors of $X^2$.

  \item Jacobi identity:

  If $a\boxtimes b\boxtimes c\cdot f(z_1,z_2,z_3)$ is a local section of $j_*j^*\mathcal{A}^{\boxtimes 3}$ where $j$ is the open embedding of the complement of the big diagonal in $X^3$. Then
  \begin{align*}
      \mu(\mu(f(z_1,z_2,z_3)\cdot &a\boxtimes b)\boxtimes c)+(-1)^{|a| (|b|+|c|)}\sigma_{1,2,3}\mu(\mu(f(z_2,z_3,z_1)\cdot b\boxtimes c )\boxtimes a)+  \\
     & (-1)^{|c| (|a|+|b|)}\sigma_{1,2,3}^{-1}\mu(\mu(f(z_3,z_1,z_2)\cdot c\boxtimes a )\boxtimes b)=0,
  \end{align*}
  here $\sigma_{1,2,3}$ denotes the cyclic permutation action on $\Delta^{X\rightarrow X^3}_*\mathcal{A}$ and $\Delta^{X\rightarrow X^3}:X\rightarrow X^3\ \text{is the diagonal embedding}$.
\end{itemize}
\end{defn}

Now we will define factorization algebra. Later we will discuss the relationship between chiral algebras and factorization algebras.

We use the following conventions in the definition. For a surjective map $\pi: J \twoheadrightarrow I$ between two finite sets $I$ and $J$, let $j^{[J / I]}: U^{[J / I]} \hookrightarrow X^J$ be the complement to all the diagonals that are transversal to $\Delta^{(J / I)}: X^I \hookrightarrow X^J$. Therefore one has $$U^{[J / I]}=\left\{\left(x_j\right) \in X^J: x_{j_1} \neq x_{j_2}\  \text{if} \ \pi\left(j_1\right) \neq \pi\left(j_2\right)\right\}.$$
\begin{defn}
A \textit{factorization algebra} on $X$ consists of the following datum:

(1) A graded quasicoherent sheaf $B_{X^I}$ over $X^I$ for any finite set $I$, which has no non-zero local sections supported at the union of all partial diagonals.

(2) Isomorphisms of graded  quasicoherent sheaves
$$
\nu^{(\pi)}
=\nu^{(J/I)}:\Delta^{(\pi)*}B_{X^J}\xrightarrow{\sim} B_{X^I}$$
for every surjection $\pi :J\twoheadrightarrow I$ and compatible with the composition of the $\pi'$s.

(3) (\textit{factorization}) For every surjection $J\twoheadrightarrow I$, there is an isomorphism of  $\mathcal{O}_{U^{[J/I]}}$-modules
$$
c_{[J/I]}:j^{[J/I]*}(\mathop{\boxtimes}\limits_{i\in I}B_{X^{J_i}})\xrightarrow{\sim} j^{[J/I]*}B_{X^J}.
$$
We require that $c$'s are mutually compatible: for $K\twoheadrightarrow J$ the isomorphism $c_{[K/J]}$ coincides with the composition $c_{[K/I]}(\mathop{\boxtimes}\limits_{i\in I} c_{[K_i/J_i]})$. And $c$ should be compatible with $\nu$: for every $J\twoheadrightarrow J'\twoheadrightarrow I$ one has
$$
\nu^{(J/J')}\Delta^{(J/J')*}(c_{[J/I]})=c_{[J'/I]}(\mathop{\boxtimes}\limits_{i\in I} \nu^{(J_i/J'_i)}).
$$

(4)(\textit{unit}) There exists a global section $1$ of $B_X$ such that for every $f\in B_X$ one has $1\boxtimes f\in B_{X^2}\subset j_*j^*B^{\boxtimes 2}_{X}$ and $\Delta^*(1\boxtimes f)=f$.

\end{defn}

There is an equivalence between the category of factorization algebras and that of chiral algebras \cite{beilinson2004chiral}. More precisely, we can obtain a chiral algebra from a factorization algebra $B$ as follows. For each surjection $J\twoheadrightarrow I$ we have a natural isomorphism of left $\mathcal{D}-$modules
$$
\Delta^{(J/I)*}B_{X^J}\xrightarrow{\sim} B_{X^I}.
$$
We can rewrite it as an isomorphism of right $\mathcal{D}_{X^J}$-modules
$$
\Delta^{(J/I)}_*\omega^{\boxtimes I}_X\otimes_{\mathcal{O}_{X^J}} B_{X^J}\xrightarrow{\sim}\Delta^{(J/I)}_*(\omega_X^{\boxtimes I}\otimes_{\mathcal{O}_{X^I}} B_{X^I}).
$$
In particular for $\Delta:X\hookrightarrow X^2$, we have
$$
\Delta_*\omega_X\otimes_{\mathcal{O}_{X^2}} B_{X^2}\xrightarrow{\sim} \Delta_*(\omega_X\otimes_{\mathcal{O}_{X}} B_X).
$$
Then we have
\begin{equation}\label{ChiralOperation}
  j_*j^*B^{r\boxtimes 2}=j_*j^*\omega^{\boxtimes 2}_X\otimes_{\mathcal{O}_{X^2}} B_{X^2}\rightarrow \Delta_*\omega_X\otimes_{\mathcal{O}_{X^2}} B_{X^2}=\Delta_*(\omega_X\otimes_{\mathcal{O}_{X}} B)=\Delta_*B^r.
\end{equation}
One can verify that the above binary operation makes the right $\mathcal{D}_X$-module $B^r$ into a chiral algebra.

Now we explain the inverse direction. Suppose we have a chiral algebra $\mathcal{A}$, then we define $\mathcal{F}_{X^I}=\mathcal{A}^l_{X^I}:=\mathcal{A}_{X^I}\otimes _{\mathcal{O}_{X^I}}\omega_{X^I}^{-1}$ on $X^I$. Here $\mathcal{A}_{X^I}$ is the intersections of the kernels of all the chiral operations on $j_*j^*\mathcal{A}^{\boxtimes I}$. Then we have
$$
\Delta^{(J/I)*}\mathcal{F}_{X^J}\simeq \mathcal{F}_{X^I}
$$
and $\mathcal{F}$ is a factorization algebra. See \cite[Section 3.4]{beilinson2004chiral} for more details.
\section{Chiral quadratic duality}\label{ChiralQuadratic}
In this section, we will review the notion of the free chiral (factorization) algebra introduced by Beilinson and Drinfeld. We will see that this notion can be viewed as quadratic algebra in the chiral world. We then formulate the chiral quadratic duality and extend it to non-homogeneous cases.

Throughout this section, $X$ denotes a smooth complex algebraic curve and $j:U\hookrightarrow X\times X$ denotes the complement of the diagonal.

\subsection{Quadratic constructions}
We first recall the construction in \cite[Section 3.4.14,pp184]{beilinson2004chiral}.

\begin{defn}
A \textit{chiral quadratic datum} is a pair $(N,P)$ where $N$ is a locally free $\mathbb{Z}$-graded $\mathcal{O}_X$-module of finite rank and $P\subset j_*j^*N\boxtimes N$ is a locally free $\mathcal{O}_{X\times X}$-submodule such that $P|_{U}=N\boxtimes N|_U.$
\end{defn}

\begin{rem}
In the original construction \cite{beilinson2004chiral}, $N$ can be any quasi-coherent $\mathcal{O}_X$-module and $P$ can be any quasi-coherent submodule of $j_*j^*N\boxtimes N$. Here for simplicity, we will only consider the case when both $N$ and $P$ are locally free.
\end{rem}

\begin{rem}
The condition $P|_U=N\boxtimes N|_U$ corresponds to the locality axiom in the definition of vertex algebras. This simply means that for every local section $a\boxtimes b\in N\boxtimes N$, we can find an  integer $n>>0$ sufficiently large such that $(z_1-z_2)^na\boxtimes b\in P$ where $z_1,z_2$ are local coordinates on $X^2$. It translates to the locality axiom: for every pair of generators $(a,b)$ of a vertex algebra, we have
$$
a_{(n)}b=0,\quad n>>0,
$$
here $-_{(n)}-$ is the standard $n$-th product notation in vertex algebras.
\end{rem}
Suppose we have a chiral quadratic datum $(N,P)$. Consider a functor on the category of chiral algebras $\mathcal{CA}(X)$ which assigns to a chiral algebra $\mathcal{A}$ the set of all $\mathcal{O}_X$-linear morphisms
$$\phi:N_{\omega}= N\otimes_{\mathcal{O}_{X}} \omega_X \rightarrow \mathcal{A}$$ such that the chiral product $\mu_{\mathcal{A}}$ annihilates the submodule  $\phi^{\boxtimes 2}(P\otimes _{\mathcal{O}_{X^2}}\omega_{X^2})\subset j_*j^*\mathcal{A}^{\boxtimes 2}$. We denote this functor by $F: \mathcal{CA}(X)\rightarrow \mathcal{S}et$, where $\mathcal{S}et$ is the category of sets.

Beilinson and Drinfeld prove the following theorem.

\begin{thm}
This functor $F$ is representable.
\end{thm}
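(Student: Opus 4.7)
The plan is to construct the representing object $\mathcal{A}(N,P)$ in two stages, passing through the equivalence with factorization algebras recalled in Section \ref{ChiralFact}: first build a free chiral algebra on the generators, then impose the quadratic relations cut out by $P$.

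First, I would construct the free chiral algebra $\mathcal{A}^{free}(N)$ representing the forgetful functor $\mathcal{A} \mapsto \mathrm{Hom}_{\mathcal{O}_X}(N_\omega, \mathcal{A})$ from $\mathcal{CA}(X)$ to $\mathcal{S}et$. The cleanest construction is on the factorization side: its value on $X^I$ is a sum over set partitions $T = \{I_1,\dots,I_k\}$ of $I$ of terms
$$
(j^T)_*(j^T)^*\Delta^{(T)}_*\bigl(\mathrm{Sym}^{I_1}(N)\boxtimes\cdots\boxtimes\mathrm{Sym}^{I_k}(N)\bigr),
$$
where $j^T$ denotes the inclusion of the open stratum in which only the prescribed collisions occur. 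The required factorization isomorphisms $c$ and the diagonal restrictions $\nu$ are read off directly from this combinatorial formula, and the unit comes from the trivial partition. The universal property follows because a morphism out is determined by its restriction to the summand with $|I|=1$, i.e., by an $\mathcal{O}_X$-linear map $N_\omega \to \mathcal{A}$.

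Second, I would form $\mathcal{A}(N,P) := \mathcal{A}^{free}(N)/\mathcal{I}(P)$, where $\mathcal{I}(P)$ is the smallest chiral ideal containing the image $\mu(P \otimes_{\mathcal{O}_{X^2}} \omega_{X^2}) \subset \Delta_*\mathcal{A}^{free}(N)$. Here a chiral ideal is a $\mathcal{D}_X$-submodule $\mathcal{I}\subset \mathcal{A}^{free}(N)$ stable under the chiral product with $\mathcal{A}^{free}(N)$ on either side in the sense that
$$
\mu\bigl(j_*j^*\,\mathcal{A}^{free}(N)\boxtimes\mathcal{I}\bigr)\subset \Delta_*\mathcal{I},
$$
and symmetrically; the existence of a smallest such ideal containing a prescribed subsheaf is formal, and the quotient inherits a well-defined chiral product from antisymmetry and Jacobi on $\mathcal{A}^{free}(N)$.

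Third, the universal property of $\mathcal{A}(N,P)$ follows by combining the two steps: an $\mathcal{O}_X$-linear $\phi:N_\omega\to\mathcal{A}$ extends uniquely by Step 1 to a chiral map $\tilde\phi:\mathcal{A}^{free}(N)\to\mathcal{A}$, and $\tilde\phi$ descends to the quotient by $\mathcal{I}(P)$ if and only if $\tilde\phi$ kills $\mu(P \otimes \omega_{X^2})$, which by $\mathcal{O}_X$-linearity of $\phi$ is exactly the condition that $\mu_{\mathcal{A}}\circ \phi^{\boxtimes 2}$ vanish on $P\otimes \omega_{X^2}$. The main obstacle is Step 1: verifying that the partial-diagonal formula truly assembles into a factorization algebra — i.e., checking the cocycle-like compatibility of the $c_{[J/I]}$'s and $\nu^{(J/I)}$'s under refinement of partitions — and that the resulting chiral algebra enjoys the claimed adjunction. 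Once this free object is in hand, the quotient step and the universal property are essentially formal.
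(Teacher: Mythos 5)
There is a genuine gap, and it sits at the foundation of your plan: the object $\mathcal{A}^{free}(N)$ of Step 1 does not exist. In any chiral algebra $\mathcal{A}$, locality is automatic: for local sections $a,b$, the section $\mu(a\boxtimes b)$ of $\Delta_*\mathcal{A}$ is locally a finite sum of terms involving $\delta$-functions along the diagonal and finitely many of their derivatives, and since $\mu$ is $\mathcal{O}_{X^2}$-linear, multiplying by a sufficiently high power of $(z_1-z_2)$ kills it; equivalently, in the vertex-algebra dictionary $a_{(n)}b=0$ for $n\gg 0$. A representing object for the plain forgetful functor $\mathcal{A}\mapsto\mathrm{Hom}_{\mathcal{O}_X}(N_\omega,\mathcal{A})$ would have to map onto targets in which the images of two generators have products with arbitrarily high-order poles (such targets exist, e.g.\ the algebras built from the data $P_k=N\boxtimes N(k\Delta)$, cf.\ Roitman's free vertex algebras with prescribed locality), forcing the universal sections to violate the locality that the universal object, being itself a chiral algebra, must satisfy. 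This is precisely why the datum $P$ with $P|_U=N\boxtimes N|_U$ — which encodes the locality bounds — must enter the construction from the very beginning and cannot be imposed afterwards as a quotient of a free object; ``free then quotient'' is the standard route for associative algebras, but it is not available here, and this is the real content of the theorem. (A secondary problem: your explicit partition formula is a $\mathrm{Sym}$-type, commutative-looking object, and inserting $(j^T)_*(j^T)^*$ destroys the requirement that $\Delta^{(\pi)*}B_{X^J}\cong B_{X^I}$, so even as a candidate factorization algebra it does not satisfy the axioms.)

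For comparison, the proof the paper relies on (Beilinson--Drinfeld, 3.4.14) avoids a free object altogether: starting from $(N,P)$ one builds a pre-factorization algebra $T(N,P)$ inside a tensor-algebra-like object $T(N)$, with $T(N,P)_{X^2}$ generated by $P$ together with the unit terms and the higher $T(N,P)_{X^I}$ determined by a maximality condition off the diagonals, and then applies the left adjoints of the embeddings of factorization algebras into quasi- and pre-factorization algebras to produce the factorization algebra freely generated by $(N,P)$; its associated chiral algebra represents $F$. Your Step 2 (quotients by chiral ideals) and Step 3 (the formal universal-property argument) are unobjectionable in themselves, but they cannot be carried out without a correct replacement for Step 1.
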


In \cite{beilinson2004chiral}, they refer to the corresponding universal chiral algebra as the \textit{chiral algebra freely generated by} $(N,P)$. We will denote this chiral algebra by $\mathcal{A}(N,P)$.

\begin{defn}
The \textit{quadratic chiral algebra} associated to a chiral  quadratic datum $(N,P)$ is defined to be $\mathcal{A}(N,P)$.
\end{defn}

The construction of $\mathcal{A}(N,P)$ for general $(N,P)$ is given in \cite[3.4.14,pp184]{beilinson2004chiral}. Fortunately, the details of this construction are not important for this paper as we will only use some formal properties of $\mathcal{A}(N,P)$.

Motivated by the construction of the quadratic duality for quadratic associated algebra, we introduce the quadratic dual relation $P^{\perp}$ as follows.

\begin{defn}
Let $(N,P)$ be a chiral quadratic datum. Define a $\mathcal{O}_{X\times X}$-submodule $P^{\perp}$ of $j_*j^*s^{-1}N^{\vee}_{\omega^{-1}}\boxtimes s^{-1}N^{\vee}_{\omega^{-1}}$ as follows. Consider the following sequence of maps
$$
j_*j^*s^{-1}N^{\vee}_{\omega^{-1}}\boxtimes s^{-1}N^{\vee}_{\omega^{-1}}\xrightarrow{\langle-,-|_P\rangle}\mathrm{Hom}_{\mathcal{O}_{X^2}}(P,j_*j^*s^{-2}\omega_{X^2}^{-1})\rightarrow \mathrm{Hom}_{\mathcal{O}_{X^2}}(P,\frac{j_*j^*s^{-2}\omega_{X^2}^{-1}}{s^{-2}\omega_{X^2}^{-1}}),
$$
where the first map is given by the restriction of the  natural pairing
$$
\langle-,-\rangle:(j_*j^*N\boxtimes N)\otimes_{\mathcal{O}_{X^2}}  (j_*j^*s^{-1}N^{\vee}_{\omega^{-1}}\boxtimes s^{-1}N^{\vee}_{\omega^{-1}})\rightarrow j_*j^*s^{-2}\omega_{X^2}^{-1}
$$
to $P$ and the second map is induced by the quotient map. Let $P^{\perp}$ be the kernel of the composition. In other words, we have
$$
P^{\perp}|_V=\{t|\forall p\in P|_{V},\ \langle t,p\rangle\in s^{-2}\omega_{X^2}^{-1}|_V \}
$$
for any open subset $V$ of $X\times X.$

\end{defn}

In general the pair $(s^{-1}N^{\vee}_{\omega^{-1}},P^{\perp})$ is not a chiral quadratic datum. For example, we can take $P$ to be $j_*j^*N\boxtimes N$ itself. Then $P^{\perp}$ will be the zero sheaf and does not satisfy the condition $P|_U=N\boxtimes N|_U$. This leads to the following definition.

\begin{defn}
A chiral quadratic datum $(N,P)$ is called \textit{dualizable} if
$$
P^{\perp}|_{U}=s^{-1}N^{\vee}_{\omega^{-1}}\boxtimes s^{-1}N^{\vee}_{\omega^{-1}}|_U.
$$

\end{defn}

From the following proposition, we can obtain a dual chiral quadratic datum from a dualizable chiral quadratic datum.

\begin{prop}
If a chiral quadratic datum $(N,P)$ is dualizable, then
$$
P^{\perp}\simeq s^{-2}P^{\vee}\otimes_{\mathcal{O}_{X^2}} \omega^{-1}_{X^2},
$$
here $P^{\vee}=\mathrm{Hom}_{\mathcal{O}_{X^2}}(P,\mathcal{O}_{X^2})$ is the dual of $P$. This implies that $P^{\perp}$ is also locally free and $(N^{\vee}_{\omega^{-1}},P^{\perp})$ is a dualizable chiral quadratic datum.
\end{prop}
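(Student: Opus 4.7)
The plan is to construct an explicit isomorphism $\Phi\colon P^{\perp}\xrightarrow{\sim} s^{-2}P^{\vee}\otimes_{\mathcal{O}_{X^2}}\omega_{X^2}^{-1}$, deduce local freeness of $P^{\perp}$ from that of $P$, and then verify the dualizability of the new pair by a short restriction-to-$U$ computation. The map $\Phi$ is essentially tautological: by the very definition of $P^{\perp}$, every local section $t\in P^{\perp}$ gives an $\mathcal{O}_{X^2}$-linear map $p\mapsto\langle t,p\rangle$ from $P$ to $s^{-2}\omega_{X^2}^{-1}$, i.e.\ a local section of $\mathrm{Hom}_{\mathcal{O}_{X^2}}(P,s^{-2}\omega_{X^2}^{-1})$. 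Local freeness of $P$ identifies this hom-sheaf canonically with $s^{-2}P^{\vee}\otimes\omega_{X^2}^{-1}$, defining $\Phi$.

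For injectivity, if $\Phi(t)=0$ then $\langle t,p\rangle$ vanishes for every $p\in P$. Restricting to $U$ and using both $P|_U=N\boxtimes N|_U$ and the nondegeneracy of the canonical pairing between $N\boxtimes N$ and $s^{-1}N^{\vee}_{\omega^{-1}}\boxtimes s^{-1}N^{\vee}_{\omega^{-1}}$ (both locally free of the same rank on $U$) gives $t|_U=0$; since $P^{\perp}\subset j_*j^*(s^{-1}N^{\vee}_{\omega^{-1}}\boxtimes s^{-1}N^{\vee}_{\omega^{-1}})$ and any section of $j_*j^*F$ is determined by its restriction to $U$, one concludes $t=0$. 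For surjectivity, given $\phi\colon P\to s^{-2}\omega_{X^2}^{-1}$, restrict to $U$ and use the perfect pairing there to realize $\phi|_U$ as pairing with a unique $t_U\in(s^{-1}N^{\vee}_{\omega^{-1}}\boxtimes s^{-1}N^{\vee}_{\omega^{-1}})|_U$. Set $t:=j_*t_U\in j_*j^*(s^{-1}N^{\vee}_{\omega^{-1}}\boxtimes s^{-1}N^{\vee}_{\omega^{-1}})$. The main technical point, and the principal obstacle, is to verify that this extension $t$ indeed lies in $P^{\perp}$: a priori $\langle t,p\rangle$ is only a section of the meromorphic sheaf $j_*j^*s^{-2}\omega_{X^2}^{-1}$, and we must show it is actually regular. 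The point is that $\langle t,p\rangle$ and the honest regular section $\phi(p)$ agree after restriction to $U$; again because sections of $j_*j^*(-)$ are determined by their restrictions to $U$, the two are equal, so $\langle t,p\rangle=\phi(p)\in s^{-2}\omega_{X^2}^{-1}$. This also shows $\Phi(t)=\phi$.

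Once the isomorphism $P^{\perp}\simeq s^{-2}P^{\vee}\otimes\omega_{X^2}^{-1}$ is established, local freeness of $P^{\perp}$ follows from that of $P^{\vee}$. The dualizability hypothesis $P^{\perp}|_U=s^{-1}N^{\vee}_{\omega^{-1}}\boxtimes s^{-1}N^{\vee}_{\omega^{-1}}|_U$ is exactly the condition for $(s^{-1}N^{\vee}_{\omega^{-1}},P^{\perp})$ to be a chiral quadratic datum. For the dualizability of this new pair, one uses reflexivity of locally free sheaves to identify $s^{-1}(s^{-1}N^{\vee}_{\omega^{-1}})^{\vee}_{\omega^{-1}}$ canonically with $N$, and then observes that on $U$ the quotient sheaf $j_*j^*s^{-2}\omega_{X^2}^{-1}/s^{-2}\omega_{X^2}^{-1}$ vanishes, so the defining $\perp$-condition is empty there and $(P^{\perp})^{\perp}|_U=N\boxtimes N|_U$ automatically.
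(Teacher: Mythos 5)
Your tautological construction of $\Phi\colon P^{\perp}\to \mathrm{Hom}_{\mathcal{O}_{X^2}}(P,s^{-2}\omega^{-1}_{X^2})\simeq s^{-2}P^{\vee}\otimes_{\mathcal{O}_{X^2}}\omega^{-1}_{X^2}$, and the bijectivity check (injectivity from the perfect pairing on $U$ together with the fact that a section of $j_*j^*(-)$ is its restriction to $U$; surjectivity by extending the representing section from $U$ and noting the pairings agree with the regular sections $\phi(p)$), is correct, and it is a genuinely different route from the paper, which first reduces to local bases $\{p_k\}$ of $P$ and writes down an explicit inverse via the transition functions to $\{e_i\boxtimes e_j\}$. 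However, you skip the step that carries the entire content of the hypothesis and occupies most of the paper's proof: showing that dualizability forces a uniform bound $P\subset N\boxtimes N(k\Delta)$ along the diagonal. In the definition only $N$ is assumed of finite rank; the statement that $P$ is locally free of rank $r^2$ (hence coherent) is \emph{derived} from this pole bound, and it is exactly what makes $P^{\vee}=\mathrm{Hom}_{\mathcal{O}_{X^2}}(P,\mathcal{O}_{X^2})$, and therefore $P^{\perp}$, locally free. Your sentence ``local freeness of $P^{\perp}$ follows from that of $P^{\vee}$'' silently assumes $P$ coherent; for a $P$ with unbounded poles along $\Delta$ (the situation the hypothesis is designed to exclude) the sheaf $\mathrm{Hom}_{\mathcal{O}_{X^2}}(P,\mathcal{O}_{X^2})$ need not be locally free, and nowhere do you rule this out: in fact the dualizability of $(N,P)$ is never put to work in your argument beyond restating the condition $P^{\perp}|_U=s^{-1}N^{\vee}_{\omega^{-1}}\boxtimes s^{-1}N^{\vee}_{\omega^{-1}}|_U$ for the new pair.

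The second gap is your last claim, that $(P^{\perp})^{\perp}|_U=N\boxtimes N|_U$ holds ``automatically'' because $j_*j^*s^{-2}\omega^{-1}_{X^2}/s^{-2}\omega^{-1}_{X^2}$ vanishes on $U$. If that reading of the condition were the operative one, the identical computation would show that \emph{every} chiral quadratic datum is dualizable, contradicting the paper's own example $P=j_*j^*N\boxtimes N$ (whose $P^{\perp}$ dies near the diagonal) and emptying the hypothesis of the very proposition you are proving. As the remark on locality makes clear, ``$Q|_U=M|_U$'' is meant as: each local section of $M$ lies in $Q$ after multiplication by a sufficiently high power of $z_1-z_2$; its content sits at $\Delta$, not on $U$. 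The statement you need is still true, but for a different reason: tautologically $P\subset(P^{\perp})^{\perp}$ and $(N,P)$ satisfies the locality condition (alternatively one can use the bound $P\subset N\boxtimes N(k\Delta)$ as in the paper). So this step is fixable in a line, but as written it rests on a reading that trivializes the definition, and together with the missing coherence argument it leaves the proposal short of the proposition actually proved in the paper.
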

\begin{proof}
Assume that $\mathrm{rank}(N)=r$. We first show that there exists a positive integer $k>0$ such that
$$
P\subset N\boxtimes N(k\Delta).
$$

We prove this by contradiction. Suppose that for any positive integer $k>0$, $P$ is not contained in $N\boxtimes N(k\Delta) $. Then we can find an open subset $V\subset
X$ such that $N$ and $N^{\vee}_{\omega^{-1}}$ can be trivialized on $V$ (and we denote a basis of $N|_V$ by $\{e_i\}_{1=1,\dots,r}$) and a sequence of sections
$$
\{\sum_{1\leq i,j\leq r}f^n_{ij}e_i\boxtimes e_j\}_{n\geq 1}, \quad \sum_{1\leq i,j\leq r}f^n_{ij}e_i\boxtimes e_j\in \Gamma(V\times V, P|_{V\times V})
$$
which satisfies that
$$
\{\mathrm{ord}_{\Delta}(f^n_{ij})\}^{1\leq i,j\leq r}_{n\geq 1}
$$
is unbounded below. Here the notation $\mathrm{ord}_{\Delta}$ means the pole order along the diagonal. This means that we can find $(i_0,j_0)\in \{1,\dots,r\}\times \{1,\dots r\}$ and $n_1<n_2<n_3<\cdots$ such that $\{\mathrm{ord}_{\Delta}(f^{n_i}_{i_0j_0})\}_{i\geq 1}$ is unbounded below.  Then we conclude that for $k\in \mathbb{Z}$, $\frac{e^{\vee}_{i_0} dz_1^{-1}\boxtimes e^{\vee}_{j_0}dz_2^{-1}}{(z_1-z_2)^k}\notin \Gamma(V\times V,P^{\perp}|_{V\times V})$. This implies that $P$ is not
dualizable, we get a contradiction.

We conclude that $P$ is a locally free sheaf of rank $r^2$. Then the obvious map
$$
P^{\perp}\rightarrow s^{-2}P^{\vee}\otimes_{\mathcal{O}_{X^2}} \omega^{-1}_{X^2}
$$
is an isomorphism. In fact, we can construct an inverse as follows. We work locally as above, suppose $\{e_i\boxtimes e_j\}$ (resp. $\{p_k\}$) is a local basis of $N\boxtimes N$ (resp. $P$). We can find local functions $\{f^k_{ij}\}, \{f^{-1\ k}_{ij}\}$ regular away from the diagonal such that
$$
p_k=\sum_{1\leq i,j\leq  r} f^k_{ij}e_i\boxtimes e_j,\quad e_i\boxtimes e_j=\sum_{k=1}^{r^2}  f^{-1\ k}_{ij}p_k.
$$
Define
$$
p^{\vee}_k\mapsto \sum_{1\leq i,j\leq  r}  f^{-1\ k}_{ij}e_i^{\vee}\boxtimes e_j^{\vee}.
$$
This defines the desired inverse $s^{-2}P^{\vee}\otimes_{\mathcal{O}_{X^2}}\omega^{-1}_{X^2}\rightarrow P^{\perp}$.
\end{proof}

Now we are ready to introduce the notion of quadratic dual chiral algebra.

\begin{defn}
	\label{Def_quad_dual}
Let $\mathcal{A}(N,P)$ be a  quadratic chiral algebra associated to a dualizable quadratic datum $(N,P)$. We define $\mathcal{A}^!$ to be $$\mathcal{A}(s^{-1}N^{\vee}_{\omega^{-1}},P^{\perp}).$$ We call $\mathcal{A}^!$ the \textit{quadratic dual chiral algebra } of $\mathcal{A}$.
\end{defn}

Since $P$ is locally free, we have $P^{\vee\vee}=P$ which implies that $(\mathcal{A}^{!})^!=\mathcal{A}$. This explains the name of "quadratic dual chiral algebra".

\subsection{Non-homogeneous constructions }\label{Nonhomogeneous1}

In this subsection, we modify the construction in the previous discussion to study the non-homogeneous cases. Namely, we introduce a duality notion that can be viewed as a chiral analogue of non-homogeneous quadratic duality for associative algebras \cite{positsel1993nonhomogeneous}.

Let $\mathbf{1}^{\circ}\simeq \mathcal{O}_X$ be a copy of the trivial line bundle.

\begin{defn}
	A chiral \textit{quadratic-linear-scalar} (QLS) datum is a chiral quadratic datum in the form of  $(N\oplus \mathbf{1}^{\circ},P^{\circ})$, such that
	$$
	j_*j^*(N\boxtimes \mathbf{1}^{\circ} \oplus \mathbf{1}^{\circ}\boxtimes N \oplus \mathbf{1}^{\circ}\boxtimes \mathbf{1}^{\circ})\cap P^{\circ} = N\boxtimes \mathbf{1}^{\circ} \oplus \mathbf{1}^{\circ}\boxtimes N \oplus \mathbf{1}^{\circ}\boxtimes \mathbf{1}^{\circ}.
	$$
	The QLS chiral algebra associated to a QLS datum $(N\oplus \mathbf{1}^{\circ},P^{\circ})$ is defined to be
		$$
	\mathcal{A}(N,P^{\circ})_{\mathrm{QLS}}:= \frac{\mathcal{A}(N\oplus\mathbf{1}^{\circ},P^{\circ})}{\langle \mathbf{1}^{\circ}_{\omega}-1_{\omega}\rangle}
	$$
	where $1_{\omega}=\omega_X$ is the unit and $\langle \mathbf{1}^{\circ}_{\omega}-1_{\omega}\rangle$ is the ideal generated by $\mathbf{1}^{\circ}_{\omega}-1_{\omega}$.
\end{defn}

For a chiral quadratic datum $(N\oplus\mathbf{1}^{\circ},P^{\circ})$, we denote $\mathbf{q}P^{\circ}\subset j_*j^*N\boxtimes N$ to be the image of
$$
P^{\circ}\hookrightarrow j_*j^*(N\oplus\mathbf{1}^{\circ})\boxtimes (N\oplus\mathbf{1}^{\circ})\rightarrow j_*j^*N\boxtimes N,
$$
where the first arrow is the inclusion and the second arrow is the projection. Using the fact that $(\mathbf{q}P^{\circ})^{\perp}\subset P^{\circ \perp}$, we have the following lemma.

\begin{lem}
Assume that the chiral quadratic datum $(N\oplus\mathbf{1}^{\circ},P^{\circ})$ is dualizable. Then the identity map $\mathbf{id}:s^{-1}N_{\omega^{-1}}^{\vee}\rightarrow s^{-1}N_{\omega^{-1}}^{\vee}$ induces a injective morphism of chiral algebras
$$
i:\mathcal{A}(s^{-1}N^{\vee}_{\omega^{-1}},(\mathbf{q}P^{\circ})^{\perp})\rightarrow \mathcal{A}(s^{-1}N^{\vee}_{\omega^{-1}}\oplus s^{-1}\mathbf{1}^{\circ}_{\omega^{-1}},P^{\circ\perp}).
$$
\end{lem}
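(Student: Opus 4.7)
The plan is to obtain $i$ from the universal property of $\mathcal{A}(s^{-1}N^{\vee}_{\omega^{-1}},(\mathbf{q}P^{\circ})^{\perp})$ and then to prove injectivity by constructing an explicit retraction. First, set $\mathcal{A}_{\mathrm{big}}:=\mathcal{A}(s^{-1}N^{\vee}_{\omega^{-1}}\oplus s^{-1}\mathbf{1}^{\circ}_{\omega^{-1}},P^{\circ\perp})$ and let $\phi_{0}$ be its canonical generator map. Composing $\phi_{0}$ with the split inclusion $s^{-1}N^{\vee}_{\omega^{-1}}\hookrightarrow s^{-1}(N^{\vee}\oplus\mathbf{1}^{\circ})_{\omega^{-1}}$ produces a map $\phi:s^{-1}N^{\vee}_{\omega^{-1}}\to\mathcal{A}_{\mathrm{big}}$. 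The relations $(\mathbf{q}P^{\circ})^{\perp}$, viewed via the tautological embedding $j_{*}j^{*}s^{-1}N^{\vee}_{\omega^{-1}}\boxtimes s^{-1}N^{\vee}_{\omega^{-1}}\hookrightarrow j_{*}j^{*}s^{-1}(N^{\vee}\oplus\mathbf{1}^{\circ})^{\boxtimes 2}_{\omega^{-1}}$, sit inside $P^{\circ\perp}$ (this is precisely the inclusion highlighted immediately before the lemma), and therefore are annihilated by the chiral bracket after applying $\phi^{\boxtimes 2}$. The universal property of $\mathcal{A}(s^{-1}N^{\vee}_{\omega^{-1}},(\mathbf{q}P^{\circ})^{\perp})$ then produces the desired morphism $i$.

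For injectivity, the strategy is to construct a retraction $\rho:\mathcal{A}_{\mathrm{big}}\to\mathcal{A}(s^{-1}N^{\vee}_{\omega^{-1}},(\mathbf{q}P^{\circ})^{\perp})$ induced by the projection of generators $s^{-1}(N^{\vee}\oplus\mathbf{1}^{\circ})_{\omega^{-1}}\twoheadrightarrow s^{-1}N^{\vee}_{\omega^{-1}}$ that kills the $\mathbf{1}^{\circ}$-summand. By the universal property of $\mathcal{A}_{\mathrm{big}}$, constructing $\rho$ reduces to verifying that for every local section $t\in P^{\circ\perp}$, its $s^{-1}N^{\vee}_{\omega^{-1}}\boxtimes s^{-1}N^{\vee}_{\omega^{-1}}$-component $t_{NN}$ lies in $(\mathbf{q}P^{\circ})^{\perp}$. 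The QLS axiom implies that $N\boxtimes\mathbf{1}^{\circ}$, $\mathbf{1}^{\circ}\boxtimes N$, and $\mathbf{1}^{\circ}\boxtimes\mathbf{1}^{\circ}$ all sit inside $P^{\circ}$ as regular subsheaves; pairing $t$ against these regular subsheaves forces the mixed components $t_{N1}$, $t_{1N}$, and $t_{11}$ of $t$ to be regular along the diagonal. Combining this regularity with a local analysis of the decomposition $\langle t,\tilde{p}\rangle=\langle t_{NN},\tilde{p}_{NN}\rangle+\langle t_{N1},\tilde{p}_{N1}\rangle+\langle t_{1N},\tilde{p}_{1N}\rangle+\langle t_{11},\tilde{p}_{11}\rangle$ for $\tilde{p}\in P^{\circ}$ isolates the $NN$-contribution and shows it is regular, establishing $t_{NN}\in(\mathbf{q}P^{\circ})^{\perp}$. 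Once $\rho$ is in place, $\rho\circ i$ agrees with the identity on the generating sheaf $s^{-1}N^{\vee}_{\omega^{-1}}$, and hence is the identity on all of $\mathcal{A}(s^{-1}N^{\vee}_{\omega^{-1}},(\mathbf{q}P^{\circ})^{\perp})$ by universal property; this yields injectivity of $i$.

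The main obstacle is the verification that $t_{NN}\in(\mathbf{q}P^{\circ})^{\perp}$. The subtle point is that although the components $t_{N1}$, $t_{1N}$, $t_{11}$ of elements of $P^{\circ\perp}$ are forced to be regular, the corresponding components $\tilde{p}_{N1}$, $\tilde{p}_{1N}$, $\tilde{p}_{11}$ of an arbitrary $\tilde{p}\in P^{\circ}$ may have poles along the diagonal, so the cross-pairings $\langle t_{N1},\tilde{p}_{N1}\rangle$ etc. are not automatically regular. Handling these cross-pairings carefully, using the QLS structure together with the dualizability hypothesis to control the polar parts of $P^{\circ}$ and reduce to the purely quadratic pairing, is the technical heart of the argument.
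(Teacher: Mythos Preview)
Your construction of the morphism $i$ via the universal property, using the inclusion $(\mathbf{q}P^{\circ})^{\perp}\subset P^{\circ\perp}$, is exactly the paper's argument: the paper simply states this inclusion and then records the lemma without further justification. In particular, the paper gives no separate argument for injectivity.

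However, your proposed proof of injectivity does not work. The key claim --- that for every local section $t\in P^{\circ\perp}$ the $NN$-component $t_{NN}$ lies in $(\mathbf{q}P^{\circ})^{\perp}$ --- is false in general, so the retraction $\rho$ does not exist. The affine Kac--Moody example in the paper already exhibits this. There one has $\mathbf{q}P^{\circ}=N\boxtimes N$, hence $(\mathbf{q}P^{\circ})^{\perp}=s^{-1}N^{\vee}_{\omega^{-1}}\boxtimes s^{-1}N^{\vee}_{\omega^{-1}}$ consists precisely of the sections regular along the diagonal. But the listed basis element
\[
t \;=\; s^{-1}\mathbf{1}^{\circ}_{\omega^{-1}}\boxtimes s^{-1}x_c^{\vee}\;+\;\tfrac{1}{2}\sum_{a,b}\frac{f_{ab}^{c}}{z_1-z_2}\,s^{-1}x_a^{\vee}\boxtimes s^{-1}x_b^{\vee}\;\in\;P^{\circ\perp}
\]
has $NN$-component $t_{NN}=\tfrac{1}{2}\sum_{a,b}\frac{f_{ab}^{c}}{z_1-z_2}\,s^{-1}x_a^{\vee}\boxtimes s^{-1}x_b^{\vee}$, which has a genuine first-order pole and therefore does \emph{not} lie in $(\mathbf{q}P^{\circ})^{\perp}$. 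This is precisely the scenario you flagged as the ``main obstacle'': the mixed component $t_{1N}$ is regular while the corresponding component of a lift $\tilde{p}\in P^{\circ}$ (here $\tilde{p}_{1N}=-\tfrac{1}{2}\sum_c\frac{f_{ab}^c}{z_1-z_2}\mathbf{1}^{\circ}\boxtimes x_c$) carries a pole, so the cross-pairing is singular and exactly cancels the singular part of $\langle t_{NN},\tilde{p}_{NN}\rangle$. No appeal to dualizability or the QLS axiom can repair this, since the counterexample satisfies both hypotheses. Injectivity therefore requires a different mechanism (e.g.\ a PBW/filtration argument for free chiral algebras) rather than a naive retraction on generators.
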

Retain the same notations, we introduce the notion of dualizable chiral QLS datum.

\begin{defn}
We call a chiral QLS datum $(N\oplus \mathbf{1}^{\circ},P^{\circ})$ \textit{dualizable} if $(N\oplus \mathbf{1}^{\circ},P^{\circ})$ is dualizable as a chiral quadratic datum and

1) The inner derivation
$$
d:=\mu(\underline{s^{-1}\mathbf{1}^{\circ}}\boxtimes -): \mathcal{A}(s^{-1}N^{\vee}_{\omega^{-1}}\oplus s^{-1}\mathbf{1}^{\circ}_{\omega^{-1}},P^{\circ\perp})\rightarrow \Delta_*\mathcal{A}(s^{-1}N^{\vee}_{\omega^{-1}}\oplus s^{-1}\mathbf{1}^{\circ}_{\omega^{-1}},P^{\circ\perp})
$$
preserves $\mathrm{Im}(i)$. More precisely, $d(a)$ is in the image of $\Delta_*i$ if $a$ is in the image of $i$;

2) The element $\mu(\underline{s^{-1}\mathbf{1}^{\circ}}\boxtimes \underline{s^{-1}\mathbf{1}^{\circ}})\in \Delta_*\mathcal{A}(s^{-1}N^{\vee}_{\omega^{-1}}\oplus s^{-1}\mathbf{1}^{\circ}_{\omega^{-1}},P^{\circ\perp})$ is in the image of $\Delta_*i$.
\end{defn}
Here, the notation $\underline{s^{-1}\mathbf{1}^{\circ}}$ means the identity global section $\underline{s^{-1}\mathbf{1}^{\circ}} \in \Gamma(X,s^{-1}\mathcal{O}_X)$.

We introduce the notion of twisted pair which later will serve as the "dual chiral algebra" of a QLS chiral algebra.
\begin{defn}
A twisted pair is a triple $(\mathcal{B},\mathcal{B}^{\circ},\mathbf{S})$, where $\mathcal{B}^{\circ}$ is a graded chiral algebra and $\mathcal{B}\subset \mathcal{B}^{\circ}$ is a subalgebra. And $\mathbf{S}\in \Gamma(X,\mathcal{B}^{\circ})$ is a global section of degree -1 such that

1) the map $(h\boxtimes \mathrm{id})\mu(\mathbf{S}\boxtimes -):\mathcal{B}^{\circ}\rightarrow  \mathcal{B}^{\circ}$ preserves the subalgebra $\mathcal{B}.$ Here $h(M):=M\otimes_{\mathcal{D}_X}\mathcal{O}_X$ denotes the de Rham sheaf for any right $\mathcal{D}_X$-module $M$,

2) the element $\mu(\mathbf{S}\boxtimes \mathbf{S})$  belongs to $\Delta_*\mathcal{B}.$

\end{defn}

The following proposition is just a reformulation of previous definitions.
\begin{prop}
Let $(N\oplus \mathbf{1}^{\circ},P^{\circ})$ be a dualizable chiral QLS datum.  Then the  triple $$(\mathcal{A}(s^{-1}N^{\vee}_{\omega^{-1}},(\mathbf{q}P^{\circ})^{\perp}),\mathcal{A}(s^{-1}N^{\vee}_{\omega^{-1}}\oplus s^{-1}\mathbf{1}^{\circ}_{\omega^{-1}},P^{\circ\perp}),\underline{s^{-1}\mathbf{1}^{\circ}})$$ is a twisted pair.
\end{prop}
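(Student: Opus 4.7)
The plan is to verify the three structural axioms of a twisted pair by reading them off from the preceding lemma and from the two conditions built into the definition of a dualizable chiral QLS datum. Write $\mathcal{B}^{\circ}:=\mathcal{A}(s^{-1}N^{\vee}_{\omega^{-1}}\oplus s^{-1}\mathbf{1}^{\circ}_{\omega^{-1}},P^{\circ\perp})$, $\mathcal{B}:=\mathcal{A}(s^{-1}N^{\vee}_{\omega^{-1}},(\mathbf{q}P^{\circ})^{\perp})$, and $\mathbf{S}:=\underline{s^{-1}\mathbf{1}^{\circ}}$. The first task is to record that $\mathcal{B}\subset \mathcal{B}^{\circ}$ as a graded chiral subalgebra. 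This uses the preceding lemma, which (under the assumption that $(N\oplus \mathbf{1}^{\circ},P^{\circ})$ is dualizable as a chiral quadratic datum) produces an injective morphism of chiral algebras $i:\mathcal{B}\hookrightarrow \mathcal{B}^{\circ}$, so after identifying $\mathcal{B}$ with $\mathrm{Im}(i)$ the subalgebra condition is satisfied. The degree condition on $\mathbf{S}$ is immediate from the convention $(s^{-1}V)_i=V_{i+1}$, so $\underline{s^{-1}\mathbf{1}^{\circ}}$ is a global section of $s^{-1}\mathcal{O}_X\hookrightarrow \mathcal{B}^{\circ}$ of degree $-1$.

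Next I would verify the derivation condition: the map $(h\boxtimes \mathrm{id})\mu(\mathbf{S}\boxtimes -):\mathcal{B}^{\circ}\rightarrow \mathcal{B}^{\circ}$ preserves $\mathcal{B}$. The point is that clause (1) of the dualizability assumption states exactly that the chiral operation $\mu(\underline{s^{-1}\mathbf{1}^{\circ}}\boxtimes -):\mathcal{B}^{\circ}\to \Delta_*\mathcal{B}^{\circ}$ sends $\mathrm{Im}(i)$ into $\Delta_*\mathrm{Im}(i)$. Applying $(h\boxtimes \mathrm{id})$ then produces a map landing back in $\mathcal{B}^{\circ}$, and since $(h\boxtimes \mathrm{id})$ commutes with the inclusion $\Delta_*\mathrm{Im}(i)\hookrightarrow \Delta_*\mathcal{B}^{\circ}$ (as $\Delta_*$ and $h$ are functorial), the image of $\mathcal{B}$ under the composed map again lies in $\mathcal{B}$.

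Finally I would verify the curvature condition: $\mu(\mathbf{S}\boxtimes \mathbf{S})\in \Delta_*\mathcal{B}$. This is a direct restatement of clause (2) of the dualizable QLS datum, which asserts precisely that $\mu(\underline{s^{-1}\mathbf{1}^{\circ}}\boxtimes \underline{s^{-1}\mathbf{1}^{\circ}})$ lies in $\Delta_*\mathrm{Im}(i)$, i.e. in $\Delta_*\mathcal{B}$.

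There is no substantive obstacle here; as the text observes, the proposition is essentially a dictionary translation between the two axiom lists. The only mild subtlety worth flagging in the write-up is the passage between $\mu(\mathbf{S}\boxtimes -)$ (valued in $\Delta_*\mathcal{B}^{\circ}$) and its desuspended form $(h\boxtimes \mathrm{id})\mu(\mathbf{S}\boxtimes -)$ (valued in $\mathcal{B}^{\circ}$), but since $(h\boxtimes \mathrm{id})$ is an equivalence of categories between right $\mathcal{D}_X$-modules supported on the diagonal and right $\mathcal{D}_X$-modules on $X$, preservation of a subalgebra on one side is equivalent to preservation on the other, so no extra work is required.
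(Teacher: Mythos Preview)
Your proposal is correct and matches the paper's approach: the paper simply states that the proposition is a reformulation of the preceding definitions and gives no further argument, and your write-up spells out exactly that dictionary translation. The only remark is that your final sentence slightly overstates things---$(h\boxtimes\mathrm{id})$ is not itself the Kashiwara equivalence---but the point you need, namely that functoriality of $(h\boxtimes\mathrm{id})$ carries $\Delta_*\mathrm{Im}(i)$ into $\mathrm{Im}(i)$, is correct and suffices.
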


We define the  quadratic dual of the chiral QLS algebra $\mathcal{A}(N,P^{\circ})_{\mathrm{QLS}}$ to be the above twisted pair.

We now introduce the notion of chiral CDG-algebra (curved DG-algebra) which will appear in Section \ref{Examples}.

\begin{defn}
A chiral CDG-algebra is a triple $(\mathcal{B},d,\iota)$, where $\mathcal{B}$ is a graded chiral algebra, $d:\mathcal{B}\rightarrow \mathcal{B}$ is a derivation of $\mathcal{B}$ of degree $-1$, that is, $d$ satisfies
$$
d(\mu(a\boxtimes b))=\mu(da\boxtimes b)+(-1)^{|a|}\mu(a\boxtimes db).
$$
 And $\iota\in \Gamma(X,\mathcal{B})$ is a global section of degree -2 which is called \textit{curving}. It satisfies the following

 1) $d^2(-)=(h\boxtimes \mathrm{id})\mu(\iota\boxtimes -)$,

 2) $d(\iota)=0$.

\end{defn}

We can obtain a chiral CDG-algebra from a twisted pair.
\begin{prop}
Let $(\mathcal{B},\mathcal{B}^{\circ},\mathbf{S})$ be a twisted pair. Define $$
d:=(h\boxtimes \mathrm{id})\mu(\mathbf{S}\boxtimes -): \mathcal{B}\rightarrow  \mathcal{B},
$$
$$
\iota:=(h\boxtimes \mathrm{id})\mu(\mathbf{S}\boxtimes\mathbf{S})\in \Gamma(X,\mathcal{B}),
$$
then $(\mathcal{B},d,\iota)$ is a CDG chiral algebra.
\end{prop}
\begin{proof}
It follows directly from the definition.
\end{proof}
\begin{rem}
In the case of associated algebra, Positselski \cite{positsel1993nonhomogeneous} defines the dual of a QLS algebra to be a CDG algebra constructed from the QLS data. However, in the context of chiral algebras, passing from the twisted pair to the CDG-algebra loses information. Also, the twisted pair is more suitable to construct the curved chiral chain complex which serves as the chiral analogue of the curved Hochschild chain complex in \cite{positsel1993nonhomogeneous}.
\end{rem}
\section{Maurer-Cartan equation and quadratic duality}\label{MCEquation}
In this section, we study the relationship between chiral quadratic duality and the Maurer-Cartan equations. In the associative algebra case, it is well known that if an algebra $A$ is Koszul, then the space $\mathrm{MC}(A\otimes B):=\{\alpha\in A\otimes B|[\alpha,\alpha]=0,|\alpha|=-1\}$ of solutions of the Maurer-Cartan equation has a one-to-one correspondence with the space $\mathrm{Hom}(A^!,B)$ of algebra homomorphisms. We study similar correspondence for chiral algebras. However, it is not clear to us how to define the Koszulness for chiral algebras at this stage. Nevertheless, we establish the chiral analogue of this connection for some special cases.

We first introduce the Maurer-Cartan equation for chiral algebras.
\begin{defn}\label{MCeq}
Let $\mathcal{A}$ be a graded chiral algebra.  The Maurer-Cartan equation is defined to be
$$
\mu(\alpha\boxtimes \alpha)=0,\quad \alpha\in \Gamma(X,\mathcal{A}),\ |\alpha|=-1.
$$
The set of the solutions is denoted by $\mathrm{MC}(\mathcal{A}).$
\end{defn}
\begin{rem}
	\label{MC_week}
  Sometimes, one encounters a weaker form of the Maurer-Cartan equation. It has the form of $h(\mu(\alpha\boxtimes \alpha))=0$, where $h(-)=-\otimes_{\mathcal{D}_X}\mathcal{O}_X$ is the de Rham sheaf. For example, \cite{Li:2016gcb} established a correspondence between renormalized quantum master equations and this form of Maurer Cartan equations of vertex algebras.  In the Language of vertex algebras (suppose that $X=\mathbb{C}$), a constant section $vdz$ satisfies the equation in Definition \ref{MCeq} is equivalent to $v_{(n)}v=0$ for $n\geq 0$. While the latter equation is equivalent to $v_{(0)}v=0.$
\end{rem}
We recall the definition of tensor products of chiral algebras. Suppose that $\mathcal{A}_1$ and $\mathcal{A}_2$ are chiral algebras. We denote the corresponding factorization algebras by $\mathcal{F}(\mathcal{A}_i),i=1,2$. Then
$$
\mathcal{F}_{X^I}:=\mathcal{F}(\mathcal{A}_1)_{X^I}\otimes_{\mathcal{O}_{X^I}}\mathcal{F}(\mathcal{A}_2)_{X^I}
$$
is also a factorization algebra. The tensor product $\mathcal{A}_1\otimes \mathcal{A}_2$ is defined to be the chiral algebra that corresponds to $\mathcal{F}$.

\begin{rem}
Suppose $X$ is the complex plane and $\mathcal{A}_i=X\times (V_i)_{\omega}, i=1,2$, where $V_i$ are vertex algebras. Then the above tensor  product is the same as the usual vertex algebra tensor product.
\end{rem}

In the context of quadratic associative algebras, the tensor product of a quadratic algebra and its dual contains a canonical element that satisfies the usual Maurer-Cartan equation. Here we have the chiral algebra version of this.

\begin{prop}\label{MaruerCartan1}
If we take $\mathcal{A}$ to be $\mathcal{A}(N,P)$ and $\mathcal{A}^!$ to be $\mathcal{A}(s^{-1}N^{\vee}_{\omega^{-1}},P^{\perp})$
then the canonical element $\phi(\underline{s^{-1}\mathbf{Id}})\in \Gamma(X,\phi( s^{-1}N^{\vee}\otimes_{\mathcal{O}_{X}} N))\subset \Gamma(X,\mathcal{A}^!\otimes\mathcal{A})$ is a solution to the Maurer-Cartan equation. Here $\phi: s^{-1}N^{\vee}\otimes_{\mathcal{O}_{X}} N\rightarrow \mathcal{A}^!\otimes \mathcal{A}$ is the natural map.

\end{prop}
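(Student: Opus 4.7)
The plan is to compute the chiral Maurer--Cartan bracket of the canonical element directly in a local trivialization, and then invoke the orthogonality between $P$ and $P^{\perp}$ supplied by the preceding proposition. First I would spell out the element itself: on any open $V\subset X$ where $N$ admits a local frame $\{e_i\}_{1\leq i\leq r}$ with dual frame $\{e_i^{\vee}\}$ of $N^{\vee}$, one has $\underline{s^{-1}\mathbf{Id}}|_V=\sum_i(s^{-1}e_i^{\vee})\otimes e_i$, which is manifestly frame-independent and hence a global section of $s^{-1}N^{\vee}\otimes_{\mathcal{O}_X}N$. Using the right $\mathcal{D}$-module convention of the paper, the identification $(\mathcal{A}^!\otimes \mathcal{A})_{\omega^{-1}}=\mathcal{A}^!_{\omega^{-1}}\otimes_{\mathcal{O}_X}\mathcal{A}_{\omega^{-1}}$ together with the universal embeddings $s^{-1}N^{\vee}_{\omega^{-1}}\hookrightarrow \mathcal{A}^!$ and $N_{\omega}\hookrightarrow \mathcal{A}$ makes the $\omega$-twists cancel, which yields the embedding $s^{-1}N^{\vee}\otimes_{\mathcal{O}_X}N\hookrightarrow \mathcal{A}^!\otimes \mathcal{A}$ asserted in the statement; the convention $|s^{-1}v|=|v|-1$ then gives $|\phi(\underline{s^{-1}\mathbf{Id}})|=-1$, as required by Definition \ref{MCeq}.

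The heart of the argument is to unfold the chiral product on $\mathcal{A}^!\otimes \mathcal{A}$ and apply an orthogonal decomposition. By construction of the tensor product chiral algebra through the factorization side, sections of $(\mathcal{A}^!\otimes \mathcal{A})^{\boxtimes 2}$ split on the diagonal complement as external products, so on $j_*j^*$ one has the Koszul factorization
\[
\mu_{\mathcal{A}^!\otimes \mathcal{A}}\bigl((a\otimes b)\boxtimes (a'\otimes b')\bigr)=(-1)^{|b|\,|a'|}\,\mu_{\mathcal{A}^!}(a\boxtimes a')\otimes \mu_{\mathcal{A}}(b\boxtimes b').
\]
Locally $\phi(\underline{s^{-1}\mathbf{Id}})\boxtimes \phi(\underline{s^{-1}\mathbf{Id}})$ then equals the image of $\sum_{i,j}(s^{-1}e_i^{\vee}\boxtimes s^{-1}e_j^{\vee})\otimes(e_i\boxtimes e_j)$. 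The crucial move is to re-expand this via a local frame $\{p_k\}_{k=1}^{r^2}$ of $P$: writing $e_i\boxtimes e_j=\sum_k f^{-1,k}_{ij}\,p_k$ as in the previous proposition, the very same coefficients $f^{-1,k}_{ij}$ define the local frame $\{p_k^{\perp}\}_{k=1}^{r^2}$ of $P^{\perp}$ produced there, and a direct linear-algebraic manipulation yields the identity
\[
\sum_{i,j}(s^{-1}e_i^{\vee}\boxtimes s^{-1}e_j^{\vee})\otimes(e_i\boxtimes e_j)=\sum_k p_k^{\perp}\otimes p_k
\]
in the relevant $j_*j^*$-extension. Applying the factorization formula summand by summand, each term vanishes because $\mu_{\mathcal{A}}\circ \phi^{\boxtimes 2}$ annihilates $P\otimes \omega_{X^2}$ by the defining universal property of $\mathcal{A}(N,P)$ (one could equally use that $\mu_{\mathcal{A}^!}$ annihilates the image of $P^{\perp}\otimes \omega_{X^2}$). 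Gluing over a cover of $X^2$ gives the global vanishing $\mu(\phi(\underline{s^{-1}\mathbf{Id}})\boxtimes \phi(\underline{s^{-1}\mathbf{Id}}))=0$.

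The step I expect to be the main obstacle is the careful bookkeeping of suspensions, Koszul signs and $\omega$-twists appearing in the factorization formula for $\mu_{\mathcal{A}^!\otimes \mathcal{A}}$, together with the verification that the local identity above is compatible with the $j_*j^*$-extension needed to feed the element into $\mu$; once these conventions are settled, the vanishing reduces immediately to the orthogonality decomposition and the universal properties of $\mathcal{A}(N,P)$ and $\mathcal{A}(s^{-1}N^{\vee}_{\omega^{-1}},P^{\perp})$.
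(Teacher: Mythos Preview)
Your overall strategy matches the paper's proof: work locally, use dual local frames $\{p_k\}$ of $P$ and $\{p_k^{\perp}\}$ of $P^{\perp}$ to rewrite $\underline{s^{-1}\mathbf{Id}}\boxtimes \underline{s^{-1}\mathbf{Id}}$ as $\sum_k p_k^{\perp}\otimes p_k$, and conclude vanishing from this. The one problematic step is your ``Koszul factorization'' identity
\[
\mu_{\mathcal{A}^!\otimes \mathcal{A}}\bigl((a\otimes b)\boxtimes(a'\otimes b')\bigr)=(-1)^{|b|\,|a'|}\,\mu_{\mathcal{A}^!}(a\boxtimes a')\otimes \mu_{\mathcal{A}}(b\boxtimes b').
\]
As written this does not type-check: $\mu_{\mathcal{A}^!}(a\boxtimes a')$ lives in $\Delta_*\mathcal{A}^!$ and $\mu_{\mathcal{A}}(b\boxtimes b')$ in $\Delta_*\mathcal{A}$, and there is no identification of $\Delta_*\mathcal{A}^!\otimes\Delta_*\mathcal{A}$ with $\Delta_*(\mathcal{A}^!\otimes\mathcal{A})$; in vertex-algebra language, the $n$-th product on a tensor product mixes \emph{all} modes of the two factors, so the chiral operations do not simply tensor.

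The paper avoids this by using the factorization description of the tensor product directly: since $\mathcal{F}(\mathcal{A}^!\otimes\mathcal{A})_{X^2}=\mathcal{F}(\mathcal{A}^!)_{X^2}\otimes_{\mathcal{O}_{X^2}}\mathcal{F}(\mathcal{A})_{X^2}$, and the images of $P^{\perp}$ and $P$ lie in $\mathcal{F}(\mathcal{A}^!)_{X^2}$ and $\mathcal{F}(\mathcal{A})_{X^2}$ respectively (by the universal properties of $\mathcal{A}^!$ and $\mathcal{A}$), one obtains immediately
\[
P^{\perp}\otimes_{\mathcal{O}_{X^2}}P\otimes_{\mathcal{O}_{X^2}}\omega_{X^2}\subset\ker\mu_{\mathcal{A}^!\otimes\mathcal{A}}.
\]
Your local identity $\sum_k p_k^{\perp}\otimes p_k$ then lands in this kernel on each chart, giving the vanishing. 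Replace your factorization formula by this kernel inclusion and the rest of your argument stands unchanged.
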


\begin{proof}
Suppose that $\mathrm{rank}(N)=r$. To simplify the notation, we omit the symbol $\phi$ and pretend that $s^{-1}N^{\vee}\otimes_{\mathcal{O}_{X}} N$ is a submodule of $\mathcal{A}^!\otimes\mathcal{A}$. We can cover $X\times X$ by open subsets, such that we can find a collection of sections $$\{P_{\alpha}\}_{\alpha=1,\dots,r^2}, P_{\alpha}\in P|_{V},$$
and
$$\{P^{\vee}_{\alpha}\}_{\alpha=1,\dots,r^2}, P^{\vee}_{\alpha}\in P^{\perp}\otimes_{\mathcal{O}_{X^2}} \omega_{X^2}|_{{V}},$$
such that
\begin{equation}\label{Identity}
    \underline{s^{-1}\mathbf{Id}}\boxtimes \underline{s^{-1}\mathbf{Id}}|_{{V}}=\sum_{\alpha=1}^{r^2} P^{\vee}_{\alpha}\otimes P_{\alpha}
\end{equation}
for each open subset $V$ that belongs to the covering. By the definition of the tensor product of chiral algebras, we have
$$
 P^{\perp}\otimes_{\mathcal{O}_{X^2}} P\otimes_{\mathcal{O}_{X^2}}\omega_{{X^2}}\subset \ker \mu_{\mathcal{A}^!\otimes \mathcal{A}}.
$$

This implies that $\mu(s^{-1}\mathbf{Id}\boxtimes s^{-1}\mathbf{Id})|_{{V}}=0$ for every $V$. Therefore $\mu(s^{-1}\mathbf{Id}\boxtimes s^{-1}\mathbf{Id})=0$.

\end{proof}

Parallel to the quadratic associative algebra case, we can characterize morphisms from a quadratic chiral algebra $\mathcal{A}=\mathcal{A}(N,P)$ to an arbitrary graded chiral algebra $\mathcal{B}$ as solutions of the Maurer-Cartan equations for $\mathcal{A}^!\otimes\mathcal{B}$, i.e., the tensor product of the chiral quadratic dual and the target chiral algebra.

\begin{thm}\label{Representable1}
Let $\mathcal{B}$ be a graded chiral algebra. There exists an injective map
$$
\mathrm{Hom}(\mathcal{A}(N,P),\mathcal{B})\hookrightarrow \mathrm{MC}( \mathcal{A}(s^{-1}N^{\vee}_{\omega^{-1}},P^{\perp})\otimes \mathcal{B}).
$$
\end{thm}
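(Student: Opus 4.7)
The plan is to push forward the canonical Maurer--Cartan element of Proposition \ref{MaruerCartan1} along any morphism $f \colon \mathcal{A}(N,P) \to \mathcal{B}$. Since the tensor product of chiral algebras is functorial (transparent through its definition via the associated factorization algebras), $f$ induces a morphism of chiral algebras $\mathrm{id}_{\mathcal{A}^!}\otimes f\colon \mathcal{A}^!\otimes\mathcal{A}(N,P)\to \mathcal{A}^!\otimes\mathcal{B}$. I define
$$
\Phi(f):=(\mathrm{id}_{\mathcal{A}^!}\otimes f)\bigl(\phi(\underline{s^{-1}\mathbf{Id}})\bigr)\in \Gamma(X,\mathcal{A}^!\otimes\mathcal{B}),
$$
which is of degree $-1$ because $\phi(\underline{s^{-1}\mathbf{Id}})$ is and chiral morphisms preserve degree.

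The Maurer--Cartan condition is immediate from functoriality. Proposition \ref{MaruerCartan1} asserts $\mu_{\mathcal{A}^!\otimes\mathcal{A}}(\phi(\underline{s^{-1}\mathbf{Id}})\boxtimes\phi(\underline{s^{-1}\mathbf{Id}}))=0$, and since $\mathrm{id}_{\mathcal{A}^!}\otimes f$ intertwines the chiral products,
$$
\mu_{\mathcal{A}^!\otimes\mathcal{B}}\bigl(\Phi(f)\boxtimes\Phi(f)\bigr)=\Delta_{*}(\mathrm{id}_{\mathcal{A}^!}\otimes f)\bigl(\mu_{\mathcal{A}^!\otimes\mathcal{A}}(\phi(\underline{s^{-1}\mathbf{Id}})\boxtimes\phi(\underline{s^{-1}\mathbf{Id}}))\bigr)=0,
$$
which places $\Phi(f)$ in $\mathrm{MC}(\mathcal{A}^!\otimes\mathcal{B})$ in the sense of Definition \ref{MCeq}.

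Injectivity I would handle locally. By the universal property characterizing $\mathcal{A}(N,P)$, each $f$ is determined by the restriction $\phi_f\colon N_\omega\to\mathcal{B}$. Over an open $V\subset X$ on which $N$ trivializes with basis $\{e_i\}_{1\le i\le r}$ and dual frame $\{e^i\}$ of $N^\vee$, one has $\phi(\underline{s^{-1}\mathbf{Id}})|_V=\sum_i s^{-1}e^i\otimes_{\mathcal{O}_V} e_i$, hence (suppressing the $\omega$-twist bookkeeping) $\Phi(f)|_V=\sum_i s^{-1}e^i\otimes \phi_f(e_i)$. If $\Phi(f)=\Phi(g)$, then $\sum_i s^{-1}e^i\otimes(\phi_f(e_i)-\phi_g(e_i))=0$ over $V$. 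Since the generators $s^{-1}N^\vee_{\omega^{-1}}$ embed $\mathcal{O}_X$-linearly into $\mathcal{A}^!$, a standard property of the Beilinson--Drinfeld free construction in \cite[3.4.14]{beilinson2004chiral}, the sections $s^{-1}e^i$ are $\mathcal{O}_V$-linearly independent in $\mathcal{A}^!|_V$, forcing $\phi_f(e_i)=\phi_g(e_i)$ on $V$ and hence $f=g$ by gluing. The main obstacle is precisely this final linear-independence assertion: making rigorous that the generators embed injectively into the free chiral algebra requires some details of the construction in \cite{beilinson2004chiral}, and in parallel one needs to track carefully the $\omega$-twist conventions $M\otimes N=(M_{\omega^{-1}}\otimes_{\mathcal{O}_X} N_{\omega^{-1}})_\omega$ when identifying $s^{-1}N^\vee\otimes N$ as a subsheaf of $\mathcal{A}^!\otimes\mathcal{A}$.
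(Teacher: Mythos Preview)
Your proposal is correct and follows essentially the same approach as the paper: define the map by pushing the canonical element $\phi(\underline{s^{-1}\mathbf{Id}})$ forward along $\mathrm{id}\otimes f$, invoke Proposition~\ref{MaruerCartan1} together with functoriality of the tensor product to get the Maurer--Cartan condition, and deduce injectivity from the fact that $f$ is determined by its restriction to $N_\omega$. The paper's proof is terser---it dispatches injectivity with the single phrase ``follows from the construction''---so your expanded local argument (and your honest flag about the embedding of generators) in fact spells out more than the paper does.
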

\begin{proof}
Suppose that we have a morphism $\varphi:\mathcal{A}(N,P)\rightarrow \mathcal{B}$. We claim that the element $$(\mathrm{id}\otimes\varphi)(\underline{s^{-1}\mathbf{Id}})\in \Gamma( X,\mathcal{A}(s^{-1}N^{\vee}_{\omega^{-1}},P^{\perp})\otimes \mathcal{B})$$ is a solution of the Maurer-Cartan equation. This claim follows from Proposition \ref{MaruerCartan1} and the fact that $\mathrm{id}\otimes\varphi:\mathcal{A}^!\otimes \mathcal{A}\rightarrow \mathcal{A}^!\otimes\mathcal{B}$ is a morphism of chiral algebras. The injectivity follows from the construction.
\end{proof}

We can show that the above injective map is bijective if we put more conditions. We introduce the notion of effective chiral quadratic datum.

\begin{defn}
A chiral quadratic datum $(N,P)$ is called \textit{effective} if the natural map $\phi:N_{\omega}\rightarrow \mathcal{A}(N,P)$ is injective and (for simplicity of notation, we will omit the symbol $\phi$)
$$
P\otimes_{\mathcal{O}_X} \omega_{X^2}=j_*j^*N_{\omega}\boxtimes N_{\omega}\cap\ker\mu_{\mathcal{A}(N,P)}.
$$

\end{defn}

\begin{rem}
  It is easy to find effective chiral quadratic datum. We can start from an arbitrary chiral quadratic datum $(N,P)$. If $P'\otimes_{\mathcal{O}_X} \omega_{{X^2}} =j_*j^*N_{\omega}\boxtimes N_{\omega}\cap\ker\mu_{\mathcal{A}(N,P)}$ is locally free, then we can take $(N,P')$ to be our new chiral quadratic datum. From the construction in \cite[3.4.14,pp184]{beilinson2004chiral}, we have $\mathcal{A}(N,P)=\mathcal{A}(N,P')$ and $(N,P')$ is effective.
\end{rem}
\begin{thm}\label{Representable2}
Let $\mathcal{B}$ be a graded chiral algebra which concentrated in degree 0. Assume that $N$ is degree 0 and $(s^{-1}N^{\vee}_{\omega^{-1}},P^{\perp})$ is effective, then there exists a bijection
$$
\mathrm{Hom}(\mathcal{A}(N,P),\mathcal{B})\cong \mathrm{MC}(\mathcal{A}(s^{-1}N^{\vee}_{\omega^{-1}},P^{\perp})\otimes\mathcal{B}).
$$
\end{thm}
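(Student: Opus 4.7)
The plan is to construct an explicit inverse to the injection of Theorem \ref{Representable1}. Given a Maurer-Cartan element $\alpha\in\Gamma(X,\mathcal{A}^{!}\otimes\mathcal{B})$ of degree $-1$, the first move is to cut it down to generator-level data. Since $\mathcal{B}$ is concentrated in degree $0$, we have $(\mathcal{A}^{!}\otimes\mathcal{B})_{-1}=(\mathcal{A}^{!})_{-1}\otimes\mathcal{B}$. Because $\mathcal{A}^{!}=\mathcal{A}(s^{-1}N^{\vee}_{\omega^{-1}},P^{\perp})$ is generated by the degree $-1$ sheaf $s^{-1}N^{\vee}_{\omega^{-1}}$ on top of the degree $0$ unit, and any chiral product of two generators lowers the degree by at least two, the degree $-1$ piece $(\mathcal{A}^{!})_{-1}$ agrees with the image of $s^{-1}N^{\vee}$ under the structure map, which is a subsheaf by the effectiveness of $(s^{-1}N^{\vee}_{\omega^{-1}},P^{\perp})$. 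Transposing through the canonical identification $s^{-1}N^{\vee}_{\omega^{-1}}\otimes_{\mathcal{O}_{X}}N_{\omega}\simeq s^{-1}\mathrm{End}_{\mathcal{O}_{X}}(N)$, the section $\alpha$ is therefore of the form $\alpha=(\mathrm{id}\otimes\phi)(\underline{s^{-1}\mathbf{Id}})$ for a unique $\mathcal{O}_{X}$-linear map $\phi:N_{\omega}\to\mathcal{B}$.

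The central step is to show that the Maurer-Cartan equation for $\alpha$ is equivalent to the universal property constraint $\mu_{\mathcal{B}}(\phi^{\boxtimes 2}(P\otimes\omega_{X^{2}}))=0$. Working locally on $V\subset X^{2}$, I will pick a basis $\{P_{a}\}$ of $P|_{V}$ and the dual basis $\{P^{\vee}_{a}\}$ of $(P^{\perp}\otimes\omega_{X^{2}})|_{V}$, as in the proof of Proposition \ref{MaruerCartan1}, so that
\[
\underline{s^{-1}\mathbf{Id}}\boxtimes\underline{s^{-1}\mathbf{Id}}\big|_{V}=\sum_{a}P^{\vee}_{a}\otimes P_{a}.
\]
Applying $\mathrm{id}\otimes\phi$ factorwise gives $\alpha\boxtimes\alpha|_{V}=\sum_{a}P^{\vee}_{a}\otimes\phi^{\boxtimes 2}(P_{a})$. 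Unwinding the factorization-algebra definition of $\mathcal{A}^{!}\otimes\mathcal{B}$, $\mu_{\mathcal{A}^{!}\otimes\mathcal{B}}$ on such an expression separates into contributions from $\mu_{\mathcal{A}^{!}}$ on the $P^{\vee}_{a}$-side and $\mu_{\mathcal{B}}$ on the $\phi^{\boxtimes 2}(P_{a})$-side. Effectiveness of the dual datum says $P^{\perp}\otimes\omega_{X^{2}}=j_{*}j^{*}(s^{-1}N^{\vee})\boxtimes(s^{-1}N^{\vee})\cap\ker\mu_{\mathcal{A}^{!}}$, so there are no further relations among the chosen $P^{\vee}_{a}$ beyond their lying in $\ker\mu_{\mathcal{A}^{!}}$; the Maurer-Cartan equation then forces $\mu_{\mathcal{B}}(\phi^{\boxtimes 2}(P_{a}))=0$ for each $a$. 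Globalizing over a cover of $X^{2}$ yields $\mu_{\mathcal{B}}(\phi^{\boxtimes 2}(P\otimes\omega_{X^{2}}))=0$.

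The universal property of the free quadratic chiral algebra then lifts $\phi$ uniquely to a chiral algebra morphism $\varphi:\mathcal{A}(N,P)\to\mathcal{B}$; by construction $(\mathrm{id}\otimes\varphi)(\underline{s^{-1}\mathbf{Id}})=\alpha$, so $\alpha\mapsto\varphi$ is inverse to the map of Theorem \ref{Representable1}. The main obstacle is the central step above: analyzing $\mu_{\mathcal{A}^{!}\otimes\mathcal{B}}$ on the local expression $\sum_{a}P^{\vee}_{a}\otimes\phi^{\boxtimes 2}(P_{a})$ carefully enough to separate the two factors — this requires combining the factorization-algebra definition of the tensor product of chiral algebras with the effectiveness of the dual datum in an essential way, since a priori the kernel of the tensor-product chiral operation is more subtle than a naive sum of the two individual kernels.
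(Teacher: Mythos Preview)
Your proposal is correct and follows essentially the same approach as the paper: you reduce $\alpha$ to the generator level by the degree hypothesis, extract the $\mathcal{O}_X$-map $\phi:N_\omega\to\mathcal{B}$, rewrite $\alpha\boxtimes\alpha$ in a local dual basis $\{P_a\},\{P^\vee_a\}$ as in Proposition~\ref{MaruerCartan1}, and then use effectiveness of $(s^{-1}N^\vee_{\omega^{-1}},P^\perp)$ together with the factorization description of $\ker\mu_{\mathcal{A}^{!}\otimes\mathcal{B}}$ to force $\mu_{\mathcal{B}}(\phi^{\boxtimes 2}(P_a))=0$, exactly as the paper does. Your identification of the ``central step'' as the delicate point also matches the paper's use of effectiveness at that juncture.
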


\begin{proof}

We omit the symbol $\phi$ as before. We use the notation $\mathcal{A}=\mathcal{A}(N,P), \mathcal{A}^!=\mathcal{A}(s^{-1}N^{\vee}_{\omega^{-1}},P^{\perp})$. Suppose that we have $\alpha\in \mathcal{A}^!\otimes \mathcal{B},|\alpha|=-1$ satisfies the Maurer-Cartan equation. Since we assume that both $\mathcal{B}$ and $N$ are in degree 0, we have
$$
\alpha\in s^{-1}N^{\vee}_{\omega^{-1}}\otimes_{\mathcal{O}_X}\mathcal{B} \subset \mathcal{A}^!\otimes \mathcal{B}.
$$
Then $\alpha$ defines a morphism of $\mathcal{O}_X$ modules
$$
\phi_{\alpha}:N_{\omega}\rightarrow  \mathcal{B},
$$
$$
\phi_{\alpha}(-)=\langle s\alpha,-\rangle.
$$
Note that we have
$$
(\mathrm{id}\otimes\phi_{\alpha})(\underline{s^{-1}\mathbf{Id}})=\alpha.
$$
We can cover $X^2$ by open subsets $\cup V_i$. We can find $\{P^i_{\alpha}\},\{P^{i\ \vee}_{\alpha}\}$such that the equation \ref{Identity} holds on $V_i$. Now take $V=V_i$, we have
$$
0=\mu(\alpha\boxtimes \alpha)|_{\mathbf{V}}=\mu((\mathrm{id}\boxtimes \mathrm{id})\otimes (\phi_{\alpha}\boxtimes \phi_{\alpha}) (\underline{s^{-1}\mathbf{Id}}\boxtimes \underline{s^{-1}\mathbf{Id}}))|_{{V}}
$$
$$
=\mu((\mathrm{id}\boxtimes \mathrm{id})\otimes (\phi_{\alpha}\boxtimes \phi_{\alpha})(\sum_{\alpha\in S}P^{\vee}_{\alpha}\otimes P_{\alpha}))|_{{V}}
$$
$$
=\mu(\sum_{\alpha\in S} P^{\vee}_{\alpha}\otimes (\phi_{\alpha}\boxtimes \phi_{\alpha})(P_{\alpha}))|_{{V}}.
$$

 We have $$
(\sum_{\alpha\in S} P^{\vee}_{\alpha}\otimes (\phi_{\alpha}\boxtimes \phi_{\alpha})(P_{\alpha}))|_{{V}}=\sum_{\alpha\in S} P^{\vee}_{\alpha}\otimes Q_{\alpha},\quad Q_{\alpha}\in \mathrm{ker}(\mu_{\mathcal{B}})|_{{V}}
$$
since we assume that $(s^{-1}N^{\vee}_{\omega^{-1}},P^{\perp})$ is effective. This implies that
$$
\mu_{\mathcal{B}}((\phi_{\alpha}\boxtimes \phi_{\alpha})(P_{\alpha}) )|_{{V}}=\mu_{\mathcal{B}}(Q_{\alpha})=0,\quad \alpha \in S.
$$
\end{proof}

We can generalize the notion of the Maurer-Cartan equation to twisted pairs.
\begin{defn}\label{MC_twist}
Let $\mathcal{A}$ be a graded chiral algebra and $(\mathcal{B},\mathcal{B}^{\circ},\mathbf{S})$ be a twisted pair.  The Maurer-Cartan equation is define to be
$$
\mu((\mathbf{S}+\alpha)\boxtimes (\mathbf{S}+\alpha))=0,\quad \alpha\in \Gamma(X,\mathcal{A}\otimes \mathcal{B}),\ |\alpha|=-1.
$$
The set of the solutions is denoted by $\mathrm{MC}(\mathcal{(\mathcal{B},\mathcal{B}^{\circ},\mathbf{S})\otimes \mathcal{A}})$
\end{defn}

\begin{prop}
If we take $\mathcal{A}$ to be $\frac{\mathcal{A}(N\oplus\mathbf{1}^{\circ},P^{\circ})}{\langle \mathbf{1}^{\circ}_{\omega}-1_{\omega}\rangle}$ and $(\mathcal{B},\mathcal{B}^{\circ},\mathbf{S})$ to be
\begin{equation}\label{TwistedPair}
  (\mathcal{A}(s^{-1}N^{\vee}_{\omega^{-1}},(\mathbf{q}P^{\circ})^{\perp}),\mathcal{A}(s^{-1}N^{\vee}_{\omega^{-1}}\oplus s^{-1}\mathbf{1}^{\circ}_{\omega^{-1}},P^{\circ\perp}),\underline{s^{-1}\mathbf{1}^{\circ}})
\end{equation}
the canonical element $\underline{s^{-1}\mathbf{Id}}\in  \Gamma(X,s^{-1}N^{\vee}\otimes_{\mathcal{O}_X} N)\subset \Gamma(X,\mathcal{B}\otimes \mathcal{A})$ is a solution to the Maurer-Cartan equation.
\end{prop}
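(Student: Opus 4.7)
The plan is to reduce the statement to Proposition \ref{MaruerCartan1} applied to the ``extended'' chiral quadratic datum $(N\oplus \mathbf{1}^{\circ}, P^{\circ})$. The key observation is that $\mathbf{S}+\underline{s^{-1}\mathbf{Id}}$ is, up to the quotient $\mathbf{1}^{\circ}_{\omega}\sim 1_{\omega}$, the canonical element of $(N\oplus \mathbf{1}^{\circ})^{\vee}\otimes(N\oplus \mathbf{1}^{\circ})$ sitting inside $\mathcal{B}^{\circ}\otimes \mathcal{A}(N\oplus \mathbf{1}^{\circ},P^{\circ})$.

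First, I would apply Proposition \ref{MaruerCartan1} directly to the dualizable chiral quadratic datum $(N\oplus \mathbf{1}^{\circ}, P^{\circ})$. This yields
$$
\mu\bigl(\underline{s^{-1}\mathbf{Id}_{N\oplus \mathbf{1}^{\circ}}}\boxtimes \underline{s^{-1}\mathbf{Id}_{N\oplus \mathbf{1}^{\circ}}}\bigr)=0
$$
inside $\mathcal{B}^{\circ}\otimes \mathcal{A}(N\oplus \mathbf{1}^{\circ},P^{\circ})$, where by definition $\mathcal{B}^{\circ}=\mathcal{A}(s^{-1}N^{\vee}_{\omega^{-1}}\oplus s^{-1}\mathbf{1}^{\circ}_{\omega^{-1}},P^{\circ\perp})$. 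Since $(\mathbf{1}^{\circ})^{\vee}\simeq \mathbf{1}^{\circ}$ and $\mathrm{Id}_{N\oplus \mathbf{1}^{\circ}}=\mathrm{Id}_N\oplus \mathrm{Id}_{\mathbf{1}^{\circ}}$, the canonical element decomposes cleanly as
$$
\underline{s^{-1}\mathbf{Id}_{N\oplus \mathbf{1}^{\circ}}}=\underline{s^{-1}\mathbf{Id}_N}+\underline{s^{-1}\mathbf{1}^{\circ}}\otimes \mathbf{1}^{\circ}_{\omega}
$$
with the first summand in $s^{-1}N^{\vee}\otimes_{\mathcal{O}_X} N$ and the second in $s^{-1}(\mathbf{1}^{\circ})^{\vee}_{\omega^{-1}}\otimes_{\mathcal{O}_X}\mathbf{1}^{\circ}_{\omega}$.

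Second, I would push this identity along the morphism of chiral algebras
$$
\mathrm{id}\otimes \pi:\ \mathcal{B}^{\circ}\otimes \mathcal{A}(N\oplus \mathbf{1}^{\circ},P^{\circ})\longrightarrow \mathcal{B}^{\circ}\otimes \mathcal{A},
$$
where $\pi$ is the quotient by $\langle \mathbf{1}^{\circ}_{\omega}-1_{\omega}\rangle$. Under $\mathrm{id}\otimes \pi$, the first summand is preserved (and becomes our $\underline{s^{-1}\mathbf{Id}}$), while the second summand $\underline{s^{-1}\mathbf{1}^{\circ}}\otimes \mathbf{1}^{\circ}_{\omega}$ is identified with $\underline{s^{-1}\mathbf{1}^{\circ}}\otimes 1_{\omega}$, which is precisely how $\mathbf{S}=\underline{s^{-1}\mathbf{1}^{\circ}}\in \Gamma(X,\mathcal{B}^{\circ})$ sits inside $\mathcal{B}^{\circ}\otimes \mathcal{A}$ via the unit of $\mathcal{A}$. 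Thus the image of the canonical element is exactly $\mathbf{S}+\underline{s^{-1}\mathbf{Id}}$. Because $\mathrm{id}\otimes \pi$ is a morphism of chiral algebras, it intertwines the two chiral products, and the vanishing from Step~1 transfers to
$$
\mu\bigl((\mathbf{S}+\underline{s^{-1}\mathbf{Id}})\boxtimes (\mathbf{S}+\underline{s^{-1}\mathbf{Id}})\bigr)=0,
$$
which is the Maurer--Cartan equation for the twisted pair in the sense of Definition~\ref{MC_twist}.

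The only technical obstacle I anticipate is checking that the identifications in the middle step are compatible: one needs to verify that under the natural embedding $\mathcal{B}\hookrightarrow \mathcal{B}^{\circ}$ and via the unit $1_{\omega}$ of $\mathcal{A}$, the lifted section $\mathbf{S}\otimes 1\in \mathcal{B}^{\circ}\otimes \mathcal{A}$ indeed coincides with the image of $\underline{s^{-1}\mathbf{1}^{\circ}}\otimes \mathbf{1}^{\circ}_{\omega}$ under $\mathrm{id}\otimes \pi$. This is a bookkeeping exercise using the definition of the tensor product of chiral algebras and the fact that $\pi$ sends $\mathbf{1}^{\circ}_{\omega}$ to the unit $1_{\omega}$; no deeper input is needed. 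Once that identification is in place, the proof is simply a transport of Proposition~\ref{MaruerCartan1} through $\mathrm{id}\otimes \pi$.
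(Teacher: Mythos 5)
Your proposal is correct and follows essentially the same route as the paper: apply Proposition \ref{MaruerCartan1} to the extended datum $(N\oplus\mathbf{1}^{\circ},P^{\circ})$ to get the Maurer--Cartan equation for $\underline{s^{-1}\mathbf{Id}}^{\circ}$ in $\mathcal{B}^{\circ}\otimes\mathcal{A}(N\oplus\mathbf{1}^{\circ},P^{\circ})$, then observe that its image in $\mathcal{B}^{\circ}\otimes\mathcal{A}$ decomposes as $\mathbf{S}+\underline{s^{-1}\mathbf{Id}}$. The paper compresses the passage through the quotient morphism into the single line ``Note that $\underline{s^{-1}\mathbf{Id}}^{\circ}=\underline{s^{-1}\mathbf{Id}}+\mathbf{S}$,'' whereas you make the transport along $\mathrm{id}\otimes\pi$ explicit; the content is the same.
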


\begin{proof}
The identity element $\underline{s^{-1}\mathbf{Id}}^{\circ}\in \Gamma(X,(s^{-1}N^{\vee}_{\omega^{-1}}\oplus s^{-1}\mathbf{1}^{\circ}_{\omega^{-1}})\otimes_{\mathcal{O}_X}  (N\oplus \mathbf{1}^{\circ})\otimes_{\mathcal{O}_X}\omega_X)$ satisfies the usual Maurer-Cartan equation in $ \mathcal{B}^{\circ}\otimes\mathcal{A}(N\oplus\mathbf{1}^{\circ},P^{\circ})$
$$
\mu(\underline{s^{-1}\mathbf{Id}}^{\circ}\boxtimes \underline{s^{-1}\mathbf{Id}}^{\circ})=0.
$$
Note that $\underline{s^{-1}\mathbf{Id}}^{\circ}=\underline{s^{-1}\mathbf{Id}}+\mathbf{S}\in \Gamma(X,\mathcal{B}^{\circ}\otimes \mathcal{A})$, the proposition follows.
\end{proof}
\begin{thm}
Let $\mathcal{C}$ be a graded chiral algebra and $(\mathcal{B},\mathcal{B}^{\circ},\mathbf{S})$ be the twisted pair (\ref{TwistedPair}). Then there is a injection
$$
\mathrm{Hom}(\frac{\mathcal{A}(N\oplus\mathbf{1}^{\circ},P^{\circ})}{\langle \mathbf{1}^{\circ}_{\omega}-1_{\omega}\rangle},\mathcal{C})\hookrightarrow\mathrm{MC}((\mathcal{B},\mathcal{B}^{\circ},\mathbf{S})\otimes \mathcal{C}).
$$
\end{thm}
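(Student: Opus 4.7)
The plan is to mirror the argument of Theorem \ref{Representable1}, with the canonical Maurer-Cartan element of Proposition \ref{MaruerCartan1} replaced by the twisted canonical element $\underline{s^{-1}\mathbf{Id}}^{\circ}=\underline{s^{-1}\mathbf{Id}}+\mathbf{S}$ furnished by the proposition immediately preceding the theorem statement. Given a chiral algebra morphism $\varphi:\mathcal{A}(N,P^{\circ})_{\mathrm{QLS}}\to\mathcal{C}$, I would form the tensor morphism
$$
\mathrm{id}_{\mathcal{B}^{\circ}}\otimes\varphi:\mathcal{B}^{\circ}\otimes \mathcal{A}(N,P^{\circ})_{\mathrm{QLS}}\to \mathcal{B}^{\circ}\otimes\mathcal{C},
$$
which is again a morphism of chiral algebras, and define
$$
\alpha_{\varphi}:=(\mathrm{id}_{\mathcal{B}}\otimes\varphi)(\underline{s^{-1}\mathbf{Id}})\in\Gamma(X,\mathcal{B}\otimes\mathcal{C}).
$$
The assignment $\varphi\mapsto\alpha_{\varphi}$ is the candidate injection.

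To verify that $\mathbf{S}+\alpha_{\varphi}$ solves the twisted Maurer-Cartan equation of Definition \ref{MC_twist}, I would first note that $\mathrm{id}_{\mathcal{B}^{\circ}}\otimes\varphi$ acts as the identity on the $\mathcal{B}^{\circ}$-factor, and that $\varphi$ sends $\mathbf{1}^{\circ}_{\omega}$ to the unit $1_{\omega}$ of $\mathcal{C}$ by the very definition of the QLS quotient; consequently $(\mathrm{id}_{\mathcal{B}^{\circ}}\otimes\varphi)(\underline{s^{-1}\mathbf{Id}}^{\circ})=\mathbf{S}+\alpha_{\varphi}$. Applying the chiral algebra morphism $\mathrm{id}_{\mathcal{B}^{\circ}}\otimes\varphi$ to the identity $\mu(\underline{s^{-1}\mathbf{Id}}^{\circ}\boxtimes\underline{s^{-1}\mathbf{Id}}^{\circ})=0$ supplied by the preceding proposition, and using that morphisms of chiral algebras commute with $\mu$, then immediately yields $\mu((\mathbf{S}+\alpha_{\varphi})\boxtimes(\mathbf{S}+\alpha_{\varphi}))=0$, as required.

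Injectivity is routine: any morphism out of $\mathcal{A}(N,P^{\circ})_{\mathrm{QLS}}$ is determined by its restriction to the generating sheaf $N_{\omega}\oplus\mathbf{1}^{\circ}_{\omega}$; on $\mathbf{1}^{\circ}_{\omega}$ the restriction is forced to be the unit, and on $N_{\omega}$ it is recovered from $\alpha_{\varphi}$ via the natural pairing with $s^{-1}N^{\vee}_{\omega^{-1}}$, exactly as in the proof of Theorem \ref{Representable1}. The main obstacle I anticipate is not conceptual but bookkeeping: one must check carefully that the tensor product construction is compatible with the quotient $\mathcal{A}(N\oplus\mathbf{1}^{\circ},P^{\circ})\twoheadrightarrow\mathcal{A}(N,P^{\circ})_{\mathrm{QLS}}$, so that the canonical element $\underline{s^{-1}\mathbf{Id}}^{\circ}$ descends to $\mathbf{S}+\underline{s^{-1}\mathbf{Id}}$ in $\mathcal{B}^{\circ}\otimes\mathcal{A}(N,P^{\circ})_{\mathrm{QLS}}$ and that the splitting of the identity on $N\oplus\mathbf{1}^{\circ}$ into $\mathrm{id}_N$ and $\mathrm{id}_{\mathbf{1}^{\circ}}$ is respected by $\mathrm{id}\otimes\varphi$; once this compatibility is in place, the untwisted argument of Theorem \ref{Representable1} transports directly to the twisted setting.
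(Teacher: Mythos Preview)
Your proposal is correct and follows essentially the same approach as the paper. The only packaging difference is that the paper, rather than working with $\mathrm{id}_{\mathcal{B}^{\circ}}\otimes\varphi$ on the QLS quotient, first lifts $\varphi$ to a morphism $\tilde{\phi}:\mathcal{A}(N\oplus\mathbf{1}^{\circ},P^{\circ})\to\mathcal{C}$ (with $\tilde{\phi}|_{\mathbf{1}^{\circ}_{\omega}}$ equal to the unit) and then applies the homogeneous argument of Theorem~\ref{Representable1} directly to the quadratic datum $(N\oplus\mathbf{1}^{\circ},P^{\circ})$; this sidesteps the compatibility-with-the-quotient bookkeeping you flagged as the main obstacle, since upstairs one already has $\mu(\underline{s^{-1}\mathbf{Id}}^{\circ}\boxtimes\underline{s^{-1}\mathbf{Id}}^{\circ})=0$ and the decomposition $(\mathrm{id}\otimes\tilde{\phi})(\underline{s^{-1}\mathbf{Id}}^{\circ})=\mathbf{S}+\alpha_{\varphi}$ is immediate from $\tilde{\phi}(\mathbf{1}^{\circ}_{\omega})=1_{\omega}$.
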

\begin{proof}
Suppose there is a morphism of chiral algebras
$$
\phi:\frac{\mathcal{A}(N\oplus\mathbf{1}^{\circ},P^{\circ})}{\langle \mathbf{1}^{\circ}_{\omega}-1_{\omega}\rangle}\rightarrow \mathcal{C}.
$$
Note that $\phi$ is induced by the following morphism
$$
\tilde{\phi}:\mathcal{A}(N\oplus\mathbf{1}^{\circ},P^{\circ})\rightarrow \mathcal{C}
$$
such that $\tilde{\phi}|_{N_{\omega}}=\phi|_{N_{\omega}} $ and $\tilde{\phi}|_{\mathbf{1}^{\circ}_{\omega}}=1_{\omega}=\omega_X.$
Then
$$
\alpha=\tilde{\phi}(\underline{s^{-1}\mathbf{Id}}^{\circ})-\mathbf{S}=\phi(\underline{s^{-1}\mathbf{Id}})
$$
is the solution of the Maurer-Cartan equation.

\end{proof}

Similarly, we have the following theorem.

\begin{thm}
Let $\mathcal{C}$ be a graded chiral algebra concentrated in degree 0 and $(\mathcal{B},\mathcal{B}^{\circ},\mathbf{S})$ be the twisted pair (\ref{TwistedPair}). Assume that $N$ is degree 0 and $(s^{-1}N^{\vee}_{\omega^{-1}}\oplus s^{-1}\mathbf{1}^{\circ}_{\omega^{-1}},P^{\circ\perp})$ is effective. Then there is a bijection
$$
\mathrm{Hom}(\frac{\mathcal{A}(N\oplus\mathbf{1}^{\circ},P^{\circ})}{\langle \mathbf{1}^{\circ}_{\omega}-1_{\omega}\rangle},\mathcal{C})\cong\mathrm{MC}((\mathcal{B},\mathcal{B}^{\circ},\mathbf{S})\otimes \mathcal{C}).
$$
\end{thm}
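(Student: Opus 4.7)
The plan is to reduce to Theorem~\ref{Representable2} applied to the larger chiral quadratic datum $(N\oplus\mathbf{1}^{\circ},P^{\circ})$. The preceding theorem constructed the injection $\phi\mapsto\phi(\underline{s^{-1}\mathbf{Id}})$; what remains is to build an inverse. Given $\alpha\in \mathrm{MC}((\mathcal{B},\mathcal{B}^{\circ},\mathbf{S})\otimes \mathcal{C})$, I would set $\tilde\alpha:=\mathbf{S}+\alpha\in\Gamma(X,\mathcal{B}^{\circ}\otimes\mathcal{C})$, using the inclusion $\mathcal{B}\hookrightarrow\mathcal{B}^{\circ}$ and the embedding of $\mathbf{S}\in\Gamma(X,\mathcal{B}^{\circ})$ via the unit of $\mathcal{C}$. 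By Definition~\ref{MC_twist} the equation $\mu(\tilde\alpha\boxtimes\tilde\alpha)=0$ holds, so $\tilde\alpha$ is an ordinary degree-$(-1)$ Maurer-Cartan element of $\mathcal{B}^{\circ}\otimes\mathcal{C}$.

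Since $N\oplus\mathbf{1}^{\circ}$ is concentrated in degree $0$, $\mathcal{C}$ is concentrated in degree $0$, and $(s^{-1}N^{\vee}_{\omega^{-1}}\oplus s^{-1}\mathbf{1}^{\circ}_{\omega^{-1}},P^{\circ\perp})$ is effective, Theorem~\ref{Representable2} applied to the datum $(N\oplus\mathbf{1}^{\circ},P^{\circ})$ with target $\mathcal{C}$ yields a unique morphism
$$
\tilde\phi : \mathcal{A}(N\oplus\mathbf{1}^{\circ},P^{\circ})\longrightarrow\mathcal{C}
$$
satisfying $(\mathrm{id}\otimes\tilde\phi)(\underline{s^{-1}\mathbf{Id}}^{\circ})=\tilde\alpha$. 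To descend $\tilde\phi$ to the QLS quotient I must verify $\tilde\phi(\mathbf{1}^{\circ}_{\omega})=1_{\omega}$. Under the direct sum decomposition of the generator sheaf the identity section splits as
$$
\underline{s^{-1}\mathbf{Id}}^{\circ}=\underline{s^{-1}\mathbf{Id}}+\underline{s^{-1}\mathbf{1}^{\circ}}\otimes\mathbf{1}^{\circ}_{\omega},
$$
and, as in the proof of Theorem~\ref{Representable2}, $\tilde\phi$ is determined on generators by the pairing $n\mapsto\langle s\tilde\alpha,n\rangle$. So it suffices to identify the $s^{-1}\mathbf{1}^{\circ}_{\omega^{-1}}$-component of $\tilde\alpha$.

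The key point, and main obstacle, is showing that $\alpha\in\mathcal{B}\otimes\mathcal{C}$ contributes nothing in the $s^{-1}\mathbf{1}^{\circ}_{\omega^{-1}}$ direction. The degree constraints $|\alpha|=-1$ and the concentration of $\mathcal{C}$ in degree $0$ force $\alpha\in\mathcal{B}_{-1}\otimes\mathcal{C}$; mimicking the degree argument used in Theorem~\ref{Representable2}, since the only degree-$(-1)$ generators of $\mathcal{B}=\mathcal{A}(s^{-1}N^{\vee}_{\omega^{-1}},(\mathbf{q}P^{\circ})^{\perp})$ lie in $s^{-1}N^{\vee}_{\omega^{-1}}$, one concludes $\mathcal{B}_{-1}=s^{-1}N^{\vee}_{\omega^{-1}}$. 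Thus the $s^{-1}\mathbf{1}^{\circ}_{\omega^{-1}}\otimes\mathcal{C}$-component of $\tilde\alpha=\mathbf{S}+\alpha$ is exactly $\mathbf{S}=\underline{s^{-1}\mathbf{1}^{\circ}}$, which pairs with $\mathbf{1}^{\circ}_{\omega}$ to give $1_{\omega}$. Hence $\tilde\phi(\mathbf{1}^{\circ}_{\omega})=1_{\omega}$ and $\tilde\phi$ descends to the desired morphism $\phi$; unwinding identifications shows that $\alpha\mapsto\phi$ is inverse to the injection of the previous theorem, yielding the bijection.
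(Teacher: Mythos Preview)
Your proof is correct and follows essentially the same approach as the paper. The only cosmetic difference is packaging: you invoke Theorem~\ref{Representable2} as a black box applied to $\tilde\alpha=\mathbf{S}+\alpha$ and then verify a posteriori that $\tilde\phi(\mathbf{1}^{\circ}_{\omega})=1_{\omega}$ by reading off the $s^{-1}\mathbf{1}^{\circ}_{\omega^{-1}}$-component of $\tilde\alpha$, whereas the paper defines $\tilde\phi_{\alpha}$ on generators by hand (sending $\mathbf{1}^{\circ}_{\omega}$ to the unit from the outset) and then repeats the effectiveness argument of Theorem~\ref{Representable2} inline; both routes produce the same morphism and descend to the QLS quotient for the same reason.
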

\begin{proof}

Suppose we have a solution $\alpha$ of the Maurer-Cartan equation. We can define a map
$\tilde{\phi}_{\alpha}:N_{\omega}\oplus\mathbf{1}^{\circ}_{\omega}\rightarrow \mathcal{C}$ such that
$$
\alpha=(\mathrm{id}\otimes \tilde{\phi}_{\alpha}|_{N_{\omega}})(\underline{s^{-1}\mathbf{Id}}),
$$
and $\tilde{\phi}_{\alpha}|_{\mathbf{1}^{\circ}_{\omega}}:\mathbf{1}^{\circ}_{\omega}\rightarrow \mathcal{C}$ is equal to the unit map $\omega_X\rightarrow\mathcal{C}$. Then repeat the proof in Theorem \ref{Representable1}, we have a morphism of chiral algebras
$$
\tilde{\phi}_{\alpha}:\mathcal{A}(N\oplus\mathbf{1}^{\circ},P^{\circ})\rightarrow \mathcal{C},
$$
and it factors through the ideal $\langle \mathbf{1}^{\circ}_{\omega}-1_{\omega}\rangle$ by construction. Thus, we have a morphism
$$
\phi_{\alpha}:\frac{\mathcal{A}(N\oplus\mathbf{1}^{\circ},P^{\circ})}{\langle \mathbf{1}^{\circ}_{\omega}-1_{\omega}\rangle}\rightarrow \mathcal{C}.
$$
The proof is complete.

\end{proof}

\section{Examples}\label{Examples}

There are some classical examples of Koszul duality for associative algebra. The most famous examples of Koszul dual algebras are the symmetric algebra $S(V)$ and the exterior algebra $\wedge V^{\vee}$. In the non-homogeneous case, we have the Koszul duality between the universal enveloping algebra $U(\mathfrak{g})$ and the Chevalley-Eilenberg algebra $\mathrm{CE}(\mathfrak{g})$. In this section, we discuss examples of quadratic duality for chiral algebra that parallel the cases of associative algebras.

\subsection{Commutative chiral algebra}
First, we consider the simplest quadratic datum $(N, P = N\boxtimes N)$, with $N$ locally free of finite rank. We have $\mathcal{A}(N,P) = \mathrm{Sym}(N_{\omega\mathcal{D}}) $, which is the commutative chiral algebra generated by $N_{\omega\mathcal{D}} : = N_{\omega}\otimes_{\mathcal{O}_X}\mathcal{D}_X$.

The dual quadratic datum is given by $(s^{-1}N^{\vee}_{\omega^{-1}}, P^{\perp} = s^{-1}N^{\vee}_{\omega^{-1}}\boxtimes s^{-1}N^{\vee}_{\omega^{-1}})$. It automatically satisfies $P^{\perp}|_{U} = s^{-1}N^{\vee}_{\omega^{-1}}\boxtimes s^{-1}N^{\vee}_{\omega^{-1}}|_U$, so this quadratic datum is dualizable. We have $\mathcal{A}(s^{-1}N^{\vee}_{\omega^{-1}},P^{\perp} ) =  \mathrm{Sym}( (s^{-1}N^{\vee})_{\mathcal{D}}) $, which is the graded commutative chiral algebra generated by $(s^{-1}N^{\vee})_{\mathcal{D}}:=s^{-1}N^{\vee}\otimes_{\mathcal{O}_X}\mathcal{D}_X$.

\subsection{Another pure quadratic example}
Let $N$ be the free $\mathcal{O}_X$-module $ N =\bigoplus_{i  = 1}^4\mathcal{O}_X$. We denote the corresponding basis by $\{\phi_i\}_{i = 1,\dots 4}$. We define $P$ to be the $\mathcal{O}_{X^2}$ module with basis
$$
\begin{aligned}
	&\phi_i\boxtimes \phi_j ,\;\{i,j\} \neq \{1,2\},\\
	&\phi_1 \boxtimes \phi_2 - \frac{1}{z_1 - z_2} \phi_3\boxtimes \phi_4,\\
	&\phi_2 \boxtimes \phi_1 + \frac{1}{z_1 - z_2} \phi_4\boxtimes \phi_3.\\
\end{aligned}
$$

For the dual datum, we have $s^{-1}N^{\vee}_{\omega^{-1}} = \bigoplus_{i  = 1}^4s^{-1}\omega^{-1}_X$. We denote the corresponding basis by $\{\psi_i = s^{-1}\phi_i^{\vee}\}_{i = 1,\dots 4}$. Then $P^{\perp}$ has the following basis
$$
\begin{aligned}
	&\psi_i\boxtimes \psi_j ,\;\{i,j\} \neq \{3,4\},\\
	&\psi_3 \boxtimes \psi_4 + \frac{1}{z_1 - z_2} \psi_1\boxtimes \psi_2,\\
	&\psi_4 \boxtimes \psi_3 - \frac{1}{z_1 - z_2} \psi_2\boxtimes \psi_1.
\end{aligned}
$$
$P^{\perp}$ defined above satisfies $P^{\perp}|_U = s^{-1}N^{\vee}_{\omega^{-1}}\boxtimes s^{-1}N^{\vee}_{\omega^{-1}}|_U$, so this quadratic datum is dualizable.

\subsection{Affine Kac-Moody chiral algebra}
Let $\mathfrak{g}$ be a finite dimensional Lie algebra with an invariant pairing $\kappa$. We take a basis $\{x_a\}_{1\leq a \leq n}$ of $\mathfrak{g}$.	Let $N = \mathfrak{g}\otimes \omega^{-1}_X$. We consider $P^{\circ} \subset j_*j^* (N\oplus \mathbf{1}^{\circ})\boxtimes(N\oplus \mathbf{1}^{\circ})$ be the $\mathcal{O}_{X^2}$-module defined by the following basis
\begin{equation}\label{rel_KacMoody}
	\begin{aligned}
		&\mathbf{1}^\circ\boxtimes\mathbf{1}^\circ,\\
		&\mathbf{1}^{\circ}\boxtimes x_a,\quad  x_a\boxtimes \mathbf{1}^{\circ},\quad 1\leq a\leq n,\\
		&	x_a\boxtimes y_b - \frac{1}{2}\sum_{c = 1}^n(\frac{f^c_{ab}}{z_1 - z_2})\left( \mathbf{1}^{\circ}\boxtimes x_c + x_c\boxtimes\mathbf{1}^{\circ}  \right)  - \frac{\kappa_{ab}}{(z_1 - z_2)^2}\mathbf{1}^{\circ}\boxtimes \mathbf{1}^{\circ},; 1\leq a,b\leq n,
	\end{aligned}
\end{equation}
where $\kappa_{ab} = \kappa(x_a,x_b)$.

As a more familiar construction, we consider the affine Kac-Moody Lie$^*$ algebra $\mathfrak{g}_{\mathcal{D}}^{\kappa} = \mathfrak{g}_{\mathcal{D}} \oplus \omega_X$. It gives rise to the twisted chiral enveloping algebra $U(\mathfrak{g}_{\mathcal{D}})^{\kappa}$ \cite[Section 3.7.25, pp227]{beilinson2004chiral}.
\begin{prop}\label{proof_envelop}
	We have an isomorphism of chiral algebra
	$$
	\frac{	\mathcal{A}(N\oplus \mathbf{1}^{\circ},P^\circ)}{\langle \mathbf{1}^{\circ}_{\omega}-1_{\omega}\rangle} = U(\mathfrak{g}_{\mathcal{D}})^{\kappa} .
	$$
\end{prop}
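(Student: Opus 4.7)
The plan is to identify both chiral algebras as representing the same functor on the category of chiral algebras, and then conclude by Yoneda. By the BD quadratic construction, morphisms from $\mathcal{A}(N\oplus\mathbf{1}^\circ,P^\circ)$ to any chiral algebra $\mathcal{C}$ correspond to $\mathcal{O}_X$-linear maps $\phi:(N\oplus\mathbf{1}^\circ)_\omega\to\mathcal{C}$ such that $\mu_\mathcal{C}\circ\phi^{\boxtimes 2}$ annihilates $P^\circ\otimes_{\mathcal{O}_{X^2}}\omega_{X^2}$. Passing to the quotient by $\langle\mathbf{1}^\circ_\omega-1_\omega\rangle$ further imposes that $\phi|_{\mathbf{1}^\circ_\omega}$ agrees with the unit map $\omega_X\to\mathcal{C}$. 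On the enveloping-algebra side, $U(\mathfrak{g}_\mathcal{D})^\kappa$ is characterized by the fact that $\mathrm{Hom}(U(\mathfrak{g}_\mathcal{D})^\kappa,\mathcal{C})$ is the set of Lie$^*$ morphisms $\mathfrak{g}^\kappa_\mathcal{D}\to\mathcal{C}$ sending the central copy $\omega_X\subset\mathfrak{g}^\kappa_\mathcal{D}$ to $1_\omega$ (see \cite[Section 3.7.25]{beilinson2004chiral}).

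The core step is to match these two sets of data. Given a Lie$^*$ morphism $\psi$, restrict it to $\phi_N:=\psi|_{\mathfrak{g}\otimes\omega_X}:N_\omega\to\mathcal{C}$ and define $\phi(\mathbf{1}^\circ_\omega)=1_\omega$. The Lie$^*$ bracket on $\mathfrak{g}^\kappa_\mathcal{D}$ is encoded by a $\mathcal{D}_{X^2}$-module map $j_*j^*\mathfrak{g}^\kappa_\mathcal{D}\boxtimes\mathfrak{g}^\kappa_\mathcal{D}\to\Delta_*\mathfrak{g}^\kappa_\mathcal{D}$ whose polar expansion along the diagonal recovers the structure constants $f^c_{ab}$ (at order one) and the Kac-Moody cocycle $\kappa_{ab}$ (at order two). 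A direct calculation using the unit chiral operation $j_*j^*\omega_{X^2}\to\Delta_*\omega_X$ then shows that $\psi$ being a Lie$^*$ morphism is equivalent to $\mu_\mathcal{C}\circ\phi^{\boxtimes 2}$ annihilating the generators of $P^\circ$ listed in (\ref{rel_KacMoody}). The symmetrized combination $\tfrac{1}{2}(\mathbf{1}^\circ\boxtimes x_c+x_c\boxtimes\mathbf{1}^\circ)$ and the presence of the regular term $x_a\boxtimes x_b$ are forced by the antisymmetry axiom (\ref{Antisymmetry}) applied on $j_*j^*$ together with the vanishing of $\mu$ on regular sections.

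Conversely, any such $\phi$ extends uniquely to a Lie$^*$ morphism $\mathfrak{g}^\kappa_\mathcal{D}\to\mathcal{C}$ by the same computation read backwards, producing a bijection of Hom-sets natural in $\mathcal{C}$; Yoneda then yields the claimed isomorphism of chiral algebras.

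The principal technical step is the comparison of the polar expansion of the Lie$^*$ bracket on $\mathfrak{g}^\kappa_\mathcal{D}$ with the defining relation (\ref{rel_KacMoody}) of $P^\circ$: one must unpack the unit chiral operation carefully and track the antisymmetrization so that the central term $\kappa_{ab}/(z_1-z_2)^2$ and the current term $\sum_c f^c_{ab}x_c/(z_1-z_2)$ appear with the correct coefficients (in particular the factor $\tfrac{1}{2}$). Once this dictionary is established, matching the universal properties is purely formal.
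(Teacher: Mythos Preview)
Your proposal is correct and follows essentially the same strategy as the paper: both arguments exploit the universal property of $\mathcal{A}(N\oplus\mathbf{1}^\circ,P^\circ)$ on one side and of the twisted chiral envelope $U(\mathfrak{g}_\mathcal{D})^\kappa$ on the other, with the key computation being the identification of the Lie$^*$ bracket on $\mathfrak{g}_\mathcal{D}^\kappa$ with the relations (\ref{rel_KacMoody}). The only difference is packaging: the paper constructs explicit maps in both directions and checks that the two compositions are the identity on generators, whereas you phrase the same content as a natural bijection of Hom-functors and invoke Yoneda.
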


\begin{proof}
	On the one hand, we have a map $N_{\omega}\oplus \mathbf{1}^{\circ}_{\omega} \to \mathfrak{g}_{\mathcal{D}}^{\kappa} \to U(\mathfrak{g}_{\mathcal{D}})^{\kappa}$. By the universal property, we get a map of chiral algebra $	\mathcal{A}(N\oplus \mathbf{1}^{\circ},P) \to U(\mathfrak{g}_{\mathcal{D}})^{\kappa}$. By construction, $\mathbf{1}^{\circ}_{\omega}$ is mapped to the unit of $U(\mathfrak{g}_{\mathcal{D}})^{\kappa}$. Therefore we have a map of chiral algebra $\frac{	\mathcal{A}(N\oplus \mathbf{1}^{\circ},P^\circ)}{\langle \mathbf{1}^{\circ}_{\omega}-1_{\omega}\rangle} \to U(\mathfrak{g}_{\mathcal{D}})^{\kappa}$.
	
	On the other hand, we consider the map $N_{\omega}\oplus \mathbf{1}^{\circ}_{\omega} \to \frac{\mathcal{A}(N\oplus \mathbf{1}^{\circ},P^\circ)}{\langle \mathbf{1}^{\circ}_{\omega}-1_{\omega}\rangle} $, which extends to a $\mathcal{D}_X$-module map $\mathfrak{g}_\mathcal{D}^\kappa \to \frac{	\mathcal{A}(N\oplus \mathbf{1}^{\circ},P^\circ)}{\langle \mathbf{1}^{\circ}_{\omega}-1_{\omega}\rangle}$. Using the relation \ref{rel_KacMoody}, we find that the image of this map has the same Lie$^*$ bracket as $\mathfrak{g}_\mathcal{D}^\kappa$. Therefore we get a map of Lie$^*$ algebra $\mathfrak{g}_\mathcal{D}^\kappa \to \frac{	\mathcal{A}(N\oplus \mathbf{1}^{\circ},P^\circ)}{\langle \mathbf{1}^{\circ}_{\omega}-1_{\omega}\rangle} $. By the universal property of (twisted) chiral envelope, we have a map of chiral algebra $ U(\mathfrak{g}_{\mathcal{D}})^{\kappa} \to \frac{\mathcal{A}(N\oplus \mathbf{1}^{\circ},P^\circ)}{\langle \mathbf{1}^{\circ}_{\omega}-1_{\omega}\rangle} $.
	
	The composition $\mathfrak{g}_\mathcal{D}^\kappa \to \frac{\mathcal{A}(N\oplus \mathbf{1}^{\circ},P^\circ)}{\langle \mathbf{1}^{\circ}_{\omega}-1_{\omega}\rangle} \to U(\mathfrak{g}_{\mathcal{D}})^{\kappa}$ is the canonical map $\mathfrak{g}_\mathcal{D}^\kappa \to U(\mathfrak{g}_{\mathcal{D}})^{\kappa}$. Therefore the composition $U(\mathfrak{g}_{\mathcal{D}})^{\kappa} \to \frac{\mathcal{A}(N\oplus \mathbf{1}^{\circ},P^\circ)}{\langle \mathbf{1}^{\circ}_{\omega}-1_{\omega}\rangle} \to U(\mathfrak{g}_{\mathcal{D}})^{\kappa}$ is the identity. Similarly the composition $\frac{\mathcal{A}(N\oplus \mathbf{1}^{\circ},P^\circ)}{\langle \mathbf{1}^{\circ}_{\omega}-1_{\omega}\rangle} \to U(\mathfrak{g}_{\mathcal{D}})^{\kappa} \to \frac{\mathcal{A}(N\oplus \mathbf{1}^{\circ},P^\circ)}{\langle \mathbf{1}^{\circ}_{\omega}-1_{\omega}\rangle} $ also gives the identity.
\end{proof}

Now we analyze the quadratic dual datum. We find that $P^{\circ\perp}$ is given by the following basis
$$
\begin{aligned}
	&s^{-1}\mathbf{1}^{\circ}_{\omega^{-1}}\boxtimes s^{-1}\mathbf{1}^{\circ}_{\omega^{-1}}+ \sum_{1\leq a,b\leq n}\frac{\kappa_{ab}}{(z_1 - z_2)^2}s^{-1}x_a^\vee \boxtimes s^{-1}x^\vee_b,\\
	&s^{-1}\mathbf{1}^{\circ}_{\omega^{-1}}\boxtimes s^{-1}x_c^\vee + \frac{1}{2}\sum_{1\leq a,b\leq n}\frac{f_{ab}^c}{z_1 - z_2}s^{-1}x_a^\vee \boxtimes s^{-1}x^\vee_b,\quad 1\leq c\leq n,\\
	&s^{-1}x_c^\vee \boxtimes s^{-1}\mathbf{1}^{\circ}_{\omega^{-1}}  + \frac{1}{2}\sum_{1\leq a,b\leq n}\frac{f_{ab}^c}{z_1 - z_2}s^{-1}x_a^\vee \boxtimes s^{-1}x^\vee_b,\quad 1\leq c\leq n,\\
	& s^{-1}x_a^\vee \boxtimes s^{-1}x^\vee_b,\quad 1\leq a,b\leq n.
\end{aligned}
$$

We see that $(N\oplus \mathbf{1}^{\circ},P^\circ)$ is dualizable as qudratic datum. The quadratic projection $(\mathbf{q}P^{\circ})^{\perp}$ is given by the following basis
$$
s^{-1}x_a^\vee \boxtimes s^{-1}x^\vee_b,\quad 1\leq a,b\leq n.
$$
Therefore, the chiral algebra $\mathcal{B} = \mathcal{A}(s^{-1}N^{\vee}_{\omega^{-1}},(\mathbf{q}P^{\circ})^{\perp}) =  \mathrm{Sym}((s^{-1}N^{\vee})_{\mathcal{D}})$ is the graded commutative chiral algebra generated by $s^{-1}N^{\vee}=s^{-1}\mathfrak{g}^{\vee}\otimes_{\mathcal{O}_X}\omega_X$.

We denote $\mathcal{B}^{\circ} = \mathcal{A}(s^{-1}N^{\vee}_{\omega^{-1}}\oplus s^{-1}\mathbf{1}^{\circ}_{\omega^{-1}},P^{\circ\perp})$. To prove that $(\mathcal{B},\mathcal{B}^\circ,\underline{s^{-1}\mathbf{1}^{\circ}})$ is indeed a twisted pair, we analyze the differential and the curving element.
\begin{prop}\label{proof_CE}
	The differential defined by $d =  (h\boxtimes \mathrm{id})\mu(\underline{s^{-1}\mathbf{1}^{\circ}}\boxtimes -)$ preserves $\mathcal{B} $. Moreover, the DG chiral algebra $(\mathcal{B},d)$ is isomorphic to the Chevalley DG algebra $ (\EuScript{C}(\mathfrak{g}_{\mathcal{D}}),d_{\mathrm{CE}})$ for the Lie$^*$ algebra $\mathfrak{g}_{\mathcal{D}}$ (see \cite[Section 4.7, pp348]{beilinson2004chiral} for details, where they use the name "de Rham-Chevalley algebra" as the construction is for general Lie$^*$ algebroids).
\end{prop}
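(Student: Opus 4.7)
The plan is to proceed in two steps: first verify that $d$ preserves the subalgebra $\mathcal{B}\subset \mathcal{B}^{\circ}$ by computing its action on a generating set, then identify $(\mathcal{B},d)$ with $(\EuScript{C}(\mathfrak{g}_{\mathcal{D}}),d_{\mathrm{CE}})$ through the tautological identification of their underlying graded commutative chiral algebras.

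For the first step, I would exploit the explicit basis of $P^{\circ\perp}$ listed just above the proposition. Since $\mathcal{B}^{\circ}=\mathcal{A}(s^{-1}N^{\vee}_{\omega^{-1}}\oplus s^{-1}\mathbf{1}^{\circ}_{\omega^{-1}},P^{\circ\perp})$ is a quadratic chiral algebra, $\mu_{\mathcal{B}^{\circ}}$ annihilates $P^{\circ\perp}\otimes\omega_{X^2}$. Applied to the family of relations of the form $s^{-1}\mathbf{1}^{\circ}_{\omega^{-1}}\boxtimes s^{-1}x_c^{\vee}+\tfrac12\sum_{a,b}\tfrac{f^{c}_{ab}}{z_1-z_2}s^{-1}x_a^{\vee}\boxtimes s^{-1}x_b^{\vee}$, this produces the identity
\[
\mu\bigl(\underline{s^{-1}\mathbf{1}^{\circ}}\boxtimes s^{-1}x_c^{\vee}\bigr)=-\frac12\sum_{a,b}\mu\!\left(\tfrac{f^{c}_{ab}}{z_1-z_2}\,s^{-1}x_a^{\vee}\boxtimes s^{-1}x_b^{\vee}\right),
\]
whose right hand side is a chiral product of generators of $\mathcal{B}$ and therefore lies in $\Delta_{*}\mathcal{B}\hookrightarrow \Delta_{*}\mathcal{B}^{\circ}$. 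Applying $(h\boxtimes\mathrm{id})$ to both sides shows $d(s^{-1}x_c^{\vee})\in \mathcal{B}$. The first family of relations in $P^{\circ\perp}$ analogously shows that the curving $\iota=(h\boxtimes\mathrm{id})\mu(\underline{s^{-1}\mathbf{1}^{\circ}}\boxtimes \underline{s^{-1}\mathbf{1}^{\circ}})$ lies in $\Gamma(X,\mathcal{B})$. Since $d$ is a derivation of $\mathcal{B}^{\circ}$ (because chiral multiplication by a degree $-1$ element is a derivation) and maps a generating set of $\mathcal{B}$ into $\mathcal{B}$, it preserves $\mathcal{B}$.

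For the second step, observe that both $\mathcal{B}=\mathrm{Sym}((s^{-1}N^{\vee})_{\mathcal{D}})$ and $\EuScript{C}(\mathfrak{g}_{\mathcal{D}})$ are, as graded commutative chiral algebras, the free graded-commutative chiral algebras on $(s^{-1}\mathfrak{g}^{\vee})_{\mathcal{D}}=s^{-1}\mathfrak{g}^{\vee}\otimes_{\mathcal{O}_X}\omega_X\otimes_{\mathcal{O}_X}\mathcal{D}_X$. I would take the tautological isomorphism between them as the candidate comparison and check that it intertwines $d$ and $d_{\mathrm{CE}}$ on generators. The Chevalley--Eilenberg differential attached to the Lie$^{*}$ bracket of $\mathfrak{g}_{\mathcal{D}}$ is the unique derivation of degree $-1$ extending the map dual to that bracket; by the construction of $P^{\circ}$ in \eqref{rel_KacMoody} the Lie$^{*}$ bracket has structure constants $f^{c}_{ab}$, so $d_{\mathrm{CE}}$ sends $s^{-1}x_c^{\vee}$ precisely to the image (up to the Koszul signs from $s^{-1}$) of the formula computed above. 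Since both $d$ and $d_{\mathrm{CE}}$ are derivations agreeing on generators, they coincide.

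The main obstacle is the careful unpacking of $(h\boxtimes\mathrm{id})\mu(-\boxtimes -)$: this is the chiral analogue of the zeroth vertex-algebra product, i.e.\ it extracts the residue of the simple pole along the diagonal and identifies it with the commutative chiral product in $\mathcal{B}$. Once one recognises that the second family of relations in $P^{\circ\perp}$ is nothing but the dual transcription of the defining Lie$^{*}$ bracket on $\mathfrak{g}_{\mathcal{D}}$, the comparison with $d_{\mathrm{CE}}$ becomes essentially tautological, and the curving $\iota$ similarly matches the central extension data encoded in $\kappa$; the bookkeeping of signs from the suspension is routine but must be performed to match normalisation conventions.
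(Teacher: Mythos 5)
Your proposal is correct and follows essentially the same route as the paper: compute $d$ on the generators $s^{-1}x_c^{\vee}$ via the explicit dual relations in $P^{\circ\perp}$, identify $(h\boxtimes\mathrm{id})\mu$ restricted to $\mathcal{B}$ with the commutative product of $\mathcal{B}^l$, match the result with $d_{\mathrm{CE}}$ on the common generators of $\mathcal{B}=\mathrm{Sym}((s^{-1}N^{\vee})_{\mathcal{D}})\cong\EuScript{C}(\mathfrak{g}_{\mathcal{D}})$, and conclude by the derivation/Leibniz property coming from the chiral Jacobi identity. (Your sign $-\tfrac12$ is the one consistent with $\mu$ annihilating the relation and with the paper's later VOA formula, so the discrepancy with the paper's displayed $+\tfrac12$ is only a typo-level normalisation issue.)
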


\begin{proof}
	$\mathcal{B}$ is a commutative chiral algebra, which coincide with $\EuScript{C}(\mathfrak{g}_{\mathcal{D}})$ as plain graded chiral algebra. The corresponding left $\mathcal{D}$-module $\mathcal{B}^l$ is a commutative $\mathcal{D}_X$-algebra.
	
	We denote the image of $s^{-1}x_c^\vee$ under $s^{-1}N^{\vee}_{\omega^{-1}}=s^{-1}\mathfrak{g}^{\vee} \to \mathcal{B}^l$ by the same symbol $s^{-1}x_c^\vee$. Using the dual relation we can compute $d$ restricted to $s^{-1}N^{\vee}_{\omega^{-1}} \otimes \omega_X$ as follows
	$$
	\begin{aligned}
		d(s^{-1}x_c^\vee dz) & = (h\boxtimes \mathrm{id})\mu(s^{-1}\mathbf{1}^{\circ} \boxtimes s^{-1}x_c^\vee dz_2)\\
		& =  \frac{1}{2}\sum_{1\leq a,b\leq n} (h\boxtimes \mathrm{id})\mu(\frac{f_{ab}^c}{z_1 - z_2}s^{-1}x_a^\vee dz_1\boxtimes s^{-1}x^\vee_b dz_2).
	\end{aligned}
	$$
	The chiral operation $\mu$ restricted to $\mathcal{B}$ is given by the commutative product on $\mathcal{B}^l$. We can simplify the above map as follows
	$$
	d(s^{-1}x_c^\vee dz) = \frac{1}{2}\sum_{1\leq a,b\leq n} f_{ab}^c(s^{-1}x_a^\vee \cdot s^{-1}x^\vee_b)dz.
	$$
	Since $d$ is a $\mathcal{D}$-module map, the above result extend to a map $d: (s^{-1}N^{\vee})_\mathcal{D} \to \mathcal{B}$. We see that $d$ restricted to $(s^{-1}N^{\vee})_\mathcal{D}$ is given by the composition $(s^{-1}N^{\vee})_\mathcal{D} \overset{[-,-]^*}{\to} (s^{-1}N^{\vee})_\mathcal{D}\otimes(s^{-1}N^{\vee})_\mathcal{D} \to \mathrm{Sym}^2((s^{-1}N^{\vee})_\mathcal{D})$, which coincide with $d_{CE}$.
	
	The Jacobi identity of the chiral operation implies that $d$ satisfies the Leibniz rule. We thus complete the proof.
	
\end{proof}

The final ingredient is the curving. Using the dual relation we find that it is given by
$$
\begin{aligned}
	\iota & = (h\boxtimes \mathrm{id})\mu(s^{-1}\mathbf{1}\boxtimes s^{-1}\mathbf{1})\\
	& =  -\sum_{1\leq a,b\leq n} (h\boxtimes \mathrm{id})\mu(\frac{\kappa_{ab}}{(z_1 - z_2)^2}s^{-1}x_a^\vee dz_1 \boxtimes s^{-1}x^\vee_bdz_2).\\
\end{aligned}
$$
We see that $(h\boxtimes \mathrm{id})\mu(\frac{\kappa_{ab}}{(z_1 - z_2)^2}s^{-1}x_a^\vee dz_1\boxtimes s^{-1}x^\vee_bdz_2)$ is indeed an element of $\mathcal{B}$. Therefore the triple $(\mathcal{B},\mathcal{B}^\circ,\underline{s^{-1}\mathbf{1}^{\circ}})$ is a twisted pair and serves as the quadratic dual of $U(\mathfrak{g}_{\mathcal{D}})^{\kappa}$.

From the vertex algebra point of view, the vertex algebra corresponding to the twisted chiral envelope $U(\mathfrak{g}_{\mathcal{D}})^{\kappa}$ is the affine Kac-Moody VOA $V_{\kappa}(\mathfrak{g})$. The quadratic dual vertex algebra can be identified with the graded commutative vertex algebra $V^{CE}(\mathfrak{g}) := CE(L\mathfrak{g})$ equipped with the Chevalley-Eilenberg differential and a curving. Explicitly, we denote $\{J_a(z) = \sum\limits_{n\in \mathbf{Z} }J_{a,(n)}z^{-n - 1}\}_{1\leq a \leq n}$ the set of generating fields of $V_{\kappa}(\mathfrak{g})$. The quadratic dual vertex algebra $V^{CE}(\mathfrak{g})$ is generated by fields $\{c^{a}(z) = \sum\limits_{n\in\mathbf{Z}}c^{a}_{(n)}z^{-n - 1}\}_{1\leq a \leq n}$. The differential can be expressed as follows
$$
d (\partial^mc^a ) = -\frac{1}{2}\sum_{1\leq b,c \leq n}\sum_{r+s = m}f^{a}_{bc}\binom{m}{r}(\partial^r c^b)(\partial^s c^c),
$$
where we define $\partial^mc^a = \partial^mc^a(0)|0\rangle$. Using VOA axiom, $\partial^mc^a$ can also be identified with $T^mc^a$. The curving element can be identified with
$$
\iota = -\sum_{1\leq a,b \leq n}\kappa_{ab}(\partial c^a )c^b.
$$
The canonical element $s^{-1}\mathbf{Id} \in \Gamma(X,s^{-1}N\otimes_{\mathcal{O}_{X}} N)$ corresponds to the following element in the vertex algebra $V_{\kappa}(\mathfrak{g})\otimes V^{CE}(\mathfrak{g})$
$$
\mathbf{I} := \sum_{a = 1}^n J_a \otimes c^a.
$$

We can verify the corresponding Maurer-Cartan equation using vertex algebra operation. Note that $\mathbf{I}_{(0)} = \sum\limits_{1\leq a \leq n}\sum\limits_{l+m = -1}J_{a,(l)} \otimes c^a_{(m)}$. We find the following
$$
\mathbf{I}_{(0)}\mathbf{I} = \sum_{1\leq a,b ,c\leq n}f_{ab}^c J_c\otimes c^ac^b + \sum_{1\leq a,b\leq n}|0\rangle\otimes\kappa_{ab}(\partial c^a )c^b.
$$
We also have
$$
d\mathbf{I} = -\frac{1}{2}\sum_{1\leq a,b,c\leq n}f^a_{bc}J_a\otimes c^bc^c.
$$
Therefore, the following Maurer-Cartan equation is satisfied
\begin{equation}\label{VOA_MC1}
	d\mathbf{I} + \frac{1}{2}\mathbf{I}_{(0)}\mathbf{I} + \frac{1}{2}\iota = 0.
\end{equation}
We can use $\mathbf{I}_{(m)} = \sum\limits_{1\leq a \leq n}\sum\limits_{l+k = m-1}J_{a,(l)} \otimes c^a_{(k)}$ to check that the stronger form of Maurer-Cartan equation (see the Remark \ref{MC_week}) is also satisfied
\begin{equation}\label{VOA_MC2}
	\mathbf{I}_{(m)}\mathbf{I} = 0 ,\text{ for } m \geq 1.
\end{equation}

As a consequence, for any vertex algebra $V$ and a homomorphism $\varphi:V_{\kappa}(\mathfrak{g}) \to V $, $(\varphi\otimes \mathrm{id})(\mathbf{I})$ satisfies the Maurer-Cartan equation. On the other hand, for any vertex algebra $V$ concentrated in degree $0$, a degree $1$ element of $V^{CE}(\mathfrak{g})\otimes V$ takes the following form
$$
\alpha=\sum_{a = 1}^nc^a\otimes y_a,\; y_a \in V.
$$
For the (strong form of) Maurer-Cartan equation \ref{VOA_MC1},\ref{VOA_MC2} to hold for $\alpha$, we must have
$$
\begin{aligned}
	&y_{a,(0)}y_b = \sum_{c = 0}^n f_{ab}^cy_c,\\
	& y_{a,(1)}y_b = \kappa_{ab}|0\rangle , \\
	&y_{a,(m)}y_b = 0,\text{ for } m\geq 2.
\end{aligned}
$$
Using Borcherds identities, we find
$$
[y_{a,(l)},y_{b,(m)}] = \sum_{c = 1}^nf_{ab}^c y_{a,(l+m)} + \kappa_{ab}\delta_{n,-m}.
$$
This implies that the following map
$$
J_a \to y_a,\quad 1\leq a\leq n,
$$
defined a homomorphism of vertex algebra $V_{\kappa}(\mathfrak{g}) \to V$.
\subsection{$\beta\gamma-bc$ system}
Let $\mathbf{L} = \mathop{\bigoplus}\limits_{\alpha \in \mathbb{Q}}\mathbf{L}^{\alpha}$ be a finite dimensional $\mathbb{Q}$(conformal weight)-graded superspace. Suppose that $\mathbf{L}$ is equipped with an even symplectic pairing of conformal weight $-1$
$$
\langle-,-\rangle : \mathbf{L}^{\alpha}\otimes \mathbf{L}^{1 - \alpha} \to \mathbb{C}.
$$
We define $N = \mathop{\bigoplus}\limits_{\alpha \in \mathbb{Q}}\mathbf{L}^{\alpha}\otimes \omega^{1 - \alpha}_X$. Let $\{x_a\}_{1\leq a \leq n}$ be a basis of $\mathbf{L}$. We consider  $P^{\circ} \subset j_*j^* (N\oplus \mathbf{1}^{\circ})\boxtimes(N\oplus \mathbf{1}^{\circ})$ defined by the following basis
$$
\begin{aligned}
	&\mathbf{1}^\circ\boxtimes\mathbf{1}^\circ,\\
	&\mathbf{1}^{\circ}\boxtimes x_a,\quad  x_a\boxtimes \mathbf{1}^{\circ},\quad 1\leq a\leq n,\\
	&x_a\boxtimes y_b  - \frac{\Omega_{ab}}{z_1 - z_2}\mathbf{1}^{\circ}\boxtimes \mathbf{1}^{\circ},\quad 1\leq a,b\leq n.
\end{aligned}
$$
where $\Omega_{ab} = \langle x_a,x_b\rangle $.
\begin{prop}
	The chiral algebra $\frac{	\mathcal{A}(N\oplus \mathbf{1}^{\circ},P)}{\langle \mathbf{1}^{\circ}_{\omega}-1_{\omega}\rangle}$ defined as above is isomorphic to the chiral Weyl algebra $\EuScript{W}(\mathbf{L},\langle-,-\rangle)$ defined in \cite[Section 3.8.1, pp228]{beilinson2004chiral}. The corresponding vertex algebra is the $\beta\gamma-bc$ system.
\end{prop}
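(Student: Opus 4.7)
The plan is to mirror the strategy of Proposition \ref{proof_envelop} for the Kac-Moody case, trading the affine Kac-Moody Lie$^*$ algebra for the Heisenberg Lie$^*$ algebra $\mathbf{L}_{\mathcal{D}}^{\langle-,-\rangle}:=\mathbf{L}_{\mathcal{D}}\oplus\omega_X$ whose bracket is determined by $\langle-,-\rangle$. By the construction recalled in \cite[Section 3.8.1]{beilinson2004chiral}, the chiral Weyl algebra $\EuScript{W}(\mathbf{L},\langle-,-\rangle)$ is the twisted chiral envelope of $\mathbf{L}_{\mathcal{D}}^{\langle-,-\rangle}$ with its central element identified with the vacuum. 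My goal is to produce mutually inverse chiral algebra morphisms between $\EuScript{W}(\mathbf{L},\langle-,-\rangle)$ and $\mathcal{B}:=\mathcal{A}(N\oplus\mathbf{1}^{\circ},P^{\circ})/\langle\mathbf{1}^{\circ}_{\omega}-1_{\omega}\rangle$ using the relevant universal properties on each side.

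For the forward direction, I would consider the canonical map $N_{\omega}\oplus\mathbf{1}^{\circ}_{\omega}\to\EuScript{W}(\mathbf{L},\langle-,-\rangle)$ given by inclusion of generators, with $\mathbf{1}^{\circ}_{\omega}$ sent to the unit. A direct computation, using that the chiral operation on the envelope realizes the Lie$^*$ bracket of $\mathbf{L}_{\mathcal{D}}^{\langle-,-\rangle}$, shows that $\mu_{\EuScript{W}}$ annihilates the image of $P^{\circ}\otimes_{\mathcal{O}_{X^2}}\omega_{X^2}$: the four relations involving $\mathbf{1}^{\circ}$ vanish by the unit axiom, and the relation $x_a\boxtimes x_b-\Omega_{ab}(z_1-z_2)^{-1}\mathbf{1}^{\circ}\boxtimes\mathbf{1}^{\circ}$ is annihilated precisely because it encodes the Heisenberg bracket. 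The universal property of $\mathcal{A}(N\oplus\mathbf{1}^{\circ},P^{\circ})$ then produces a chiral algebra map which, since $\mathbf{1}^{\circ}_{\omega}$ already maps to the unit, factors through $\mathcal{B}\to\EuScript{W}(\mathbf{L},\langle-,-\rangle)$.

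For the reverse direction, I would compose the inclusion $N_{\omega}\oplus\mathbf{1}^{\circ}_{\omega}\hookrightarrow\mathcal{A}(N\oplus\mathbf{1}^{\circ},P^{\circ})$ with the quotient to $\mathcal{B}$, and extend this $\mathcal{O}_X$-linear map to a $\mathcal{D}_X$-module map $\mathbf{L}_{\mathcal{D}}^{\langle-,-\rangle}\to\mathcal{B}$. The defining relations in $P^{\circ}$, combined with the identification $\mathbf{1}^{\circ}_{\omega}=1_{\omega}$ in $\mathcal{B}$, force the induced Lie$^*$ bracket on the image to coincide on generators with the Heisenberg bracket $[x_a,x_b]^*=\Omega_{ab}\cdot\mathbf{1}$. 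Invoking the universal property of the twisted chiral envelope then yields a chiral algebra map $\EuScript{W}(\mathbf{L},\langle-,-\rangle)\to\mathcal{B}$.

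Both composites agree with the identity on the generating $\mathcal{D}_X$-modules by construction, and the relevant universal properties extend this to the full chiral algebras, exactly as in the last paragraph of the proof of Proposition \ref{proof_envelop}. The identification of the corresponding vertex algebra with the $\beta\gamma$-$bc$ system is then a matter of translating the Heisenberg operator product $x_a(z_1)x_b(z_2)\sim \Omega_{ab}/(z_1-z_2)$ into the standard free field OPE. The main technical step I expect to need care with is the reverse direction: verifying that the quotient by $\langle\mathbf{1}^{\circ}_{\omega}-1_{\omega}\rangle$ really does turn the quadratic relation into the Heisenberg Lie$^*$ bracket and does not accidentally impose further identifications; once this is checked cleanly, the rest of the argument is formal.
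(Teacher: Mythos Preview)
Your proposal is correct and is precisely the approach the paper takes: the paper's proof consists of the single sentence ``This is a corollary of Proposition \ref{proof_envelop},'' and what you have written is exactly the specialization of that proposition's argument to the Heisenberg Lie$^*$ algebra $\mathbf{L}_{\mathcal{D}}^{\langle-,-\rangle}$. Your worry about the reverse direction is not an issue here, since the identification $\mathbf{1}^{\circ}_{\omega}=1_{\omega}$ and the quadratic relation together reproduce the Heisenberg bracket just as in the Kac-Moody case, with the simplification that the linear term is absent.
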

\begin{proof}
	This is a corollary of Proposition \ref{proof_envelop}.
\end{proof}

We assume that the symplectic pairing is non-degenerate. Then the dual relation $P^\perp$ is given by the following basis
$$
\begin{aligned}
	&s^{-1}\mathbf{1}^{\circ}_{\omega^{-1}}\boxtimes s^{-1}\mathbf{1}^{\circ}_{\omega^{-1}} + \sum_{1\leq a,b\leq n}\frac{\Omega_{ab}}{z_1 - z_2}s^{-1}x_a^\vee \boxtimes s^{-1}x^\vee_b,\\
	& s^{-1}\mathbf{1}^{\circ}_{\omega^{-1}}\boxtimes s^{-1}x^\vee_a,\quad s^{-1}x^\vee_a \boxtimes s^{-1}\mathbf{1}^{\circ}_{\omega^{-1}} ,\quad 1\leq a \leq n,\\
	& s^{-1}x_a^\vee \boxtimes s^{-1}x^\vee_b,\quad 1\leq a,b\leq n,
\end{aligned}
$$

The quadratic projection $(\mathbf{q}P^{\circ})^{\perp}$ is given by the following basis
$$
s^{-1}x_a^\vee \boxtimes s^{-1}x^\vee_b.
$$
We get the graded commutative chiral algebra $\mathcal{A}(s^{-1}N^{\vee}_{\omega^{-1}},(\mathbf{q}P^{\circ})^{\perp}) =  \mathrm{Sym}((s^{-1}N^{\vee})_{\mathcal{D}})$. The differential $d$ is zero. The curving element is given as follows
$$
\begin{aligned}
	\iota & = (h\boxtimes \mathrm{id})\mu(s^{-1}\mathbf{1}^{\circ}\boxtimes s^{-1}\mathbf{1}^{\circ})\\
	& = -\sum_{1\leq a,b\leq n} (h\boxtimes \mathrm{id})\mu(\frac{\Omega_{ab}}{z_1 - z_2}s^{-1}x_a^\vee dz_1 \boxtimes s^{-1}x^\vee_bdz_2).
\end{aligned}
$$
By identifying the chiral operation with the commutative product of $\mathrm{Sym}((s^{-1}N^{\vee})_{\mathcal{D}})^l$ as in the proof of  \ref{proof_CE}, we see that the curving $\iota$ is an element of $\mathrm{Sym}((s^{-1}N^{\vee})_{\mathcal{D}})$. Moreover, $\iota$ is given by
$$
	-\sum_{1\leq a,b\leq n}\Omega_{ab}(s^{-1}x_a^\vee \cdot s^{-1}x^\vee_b)dz.
$$

\bibliographystyle{amsplain}

\begin{thebibliography}{10}

\bibitem{ayala2015factorization}
D.~Ayala and J.~Francis, \emph{Factorization homology of topological
  manifolds}, Journal of Topology \textbf{8} (2015), no.~4, 1045--1084.

\bibitem{beilinson2004chiral}
A.~Beilinson and V.~Drinfeld, \emph{Chiral algebras}, vol.~51, American
  Mathematical Soc., 2004.

\bibitem{beilinson1996koszul}
A.~Beilinson, V.~Ginzburg, and W.~Soergel, \emph{Koszul duality patterns in
  representation theory}, Journal of the American Mathematical Society
  \textbf{9} (1996), no.~2, 473--527.

\bibitem{bernshtein1978algebraic}
I.N. Bernshtein, I.M. Gel'fand, and S.I. Gel'fand, \emph{Algebraic bundles over
  $\mathbb{P}^n$ and problems of linear algebra}, Functional Analysis and Its Applications
  \textbf{12} (1978), no.~3, 212--214.

\bibitem{Costello:2017fbo}
K.~Costello, \emph{{Holography and Koszul duality: the example of the $M2$
  brane}}, arXiv preprint arXiv:1705.02500 (2017).

\bibitem{Costello:2016mgj}
K.~Costello and S.~Li, \emph{{Twisted supergravity and its quantization}},
  arXiv preprint arXiv:1606.00365 (2016).

\bibitem{costello2021twisted}
K.~Costello and N.~M. Paquette, \emph{Twisted supergravity and koszul duality: A
  case study in $\text{AdS}_3$}, Communications in Mathematical Physics
  \textbf{384} (2021), no.~1, 279--339.

\bibitem{francis2012chiral}
J.~Francis and D.~Gaitsgory, \emph{Chiral koszul duality}, Selecta Mathematica
  \textbf{18} (2012), no.~1, 27--87.

\bibitem{frenkel2004vertex}
E.~Frenkel and D.~Ben-Zvi, \emph{Vertex algebras and algebraic curves}, no.~88,
  American Mathematical Soc., 2004.
\bibitem{gaitsgory1998notes}
D.~Gaitsgory, \emph{Notes on 2d conformal field theory and string theory, in
  quantum fields and strings: a course for mathematicians},  (1999).


\bibitem{ginzburg1994koszul}
V.~Ginzburg and M.~Kapranov, \emph{Koszul duality for operads}, Duke
  mathematical journal \textbf{76} (1994), no.~1, 203--272.

\bibitem{goresky1998equivariant}
M.~Goresky, R.~Kottwitz, and R.~MacPherson, \emph{Equivariant cohomology,
  koszul duality, and the localization theorem}, Inventiones mathematicae
  \textbf{131} (1998), no.~1, 25--84.
\bibitem{Li:2016gcb}
S.~Li, \emph{{Vertex algebras and quantum master equation}}, To appear in
  J.Diff.Geom.

\bibitem{loday2012algebraic}
J.-L. Loday and B.~Vallette, \emph{Algebraic operads}, vol. 346, Springer,
  2012.

\bibitem{Paquette:2021cij}
N.~M. Paquette and B.~R. Williams, \emph{{Koszul duality in quantum
  field theory}}, arXiv preprint arXiv:2110.10257 (2021).

\bibitem{positsel1993nonhomogeneous}
 L.~Positselski, \emph{Nonhomogeneous quadratic duality and curvature},
  Functional Analysis and Its Applications \textbf{27} (1993), no.~3, 197--204.

\bibitem{priddy1970koszul}
S.B. Priddy, \emph{Koszul resolutions}, Transactions of the American
  Mathematical Society \textbf{152} (1970), no.~1, 39--60.

\bibitem{roitman2001combinatorics}
M.~Roitman, \emph{Combinatorics of free vertex algebras}, arXiv preprint
  math/0103173 (2001).

\end{thebibliography}
\providecommand{\bysame}{\leavevmode\hbox to3em{\hrulefill}\thinspace}
\providecommand{\MR}{\relax\ifhmode\unskip\space\fi MR }
\providecommand{\MRhref}[2]{%
  \href{http://www.ams.org/mathscinet-getitem?mr=#1}{#2}
}
\providecommand{\href}[2]{#2}

\end{document}